\documentclass[12pt]{amsart}

\usepackage{graphicx,cite}
\usepackage{braket}
\usepackage{color}
\usepackage{geometry}
\usepackage{marginnote}
\usepackage{tikz-cd,mathdots}
%\linenumbers
%
%\DeclareMathOperator{\ext}[2]{Ext^1(#1,#2)}

%
%\DeclareMathOperator{\short}[2]{0\mapsto E \mathop{\mapsto}\limits^{#1} G \mathop{\mapsto}\limits^{#2} F \mapsto 0}
%\usepackage{timestamp}

\newtheorem{theorem}{Theorem}[section]
\newtheorem{lemma}[theorem]{Lemma}

\newtheorem{definition}[theorem]{Definition}

\newtheorem{remark}[theorem]{Remark}

\theoremstyle{remark}

\newcommand\G{\Gamma}
\newcommand\beq{\begin{equation}}
\newcommand\eeq{\end{equation}}
\newcommand\bed{\begin{definition}}
\newcommand\ed{\end{definition}}
\newcommand\brem{\begin{remark}}
\newcommand\erem{\end{remark}}

\newcommand{\short}[5]{0\mapsto #1 \mathop{\mapsto}\limits^{#2} #3 \mathop{\mapsto}\limits^{#4} #5 \mapsto 0}
\newcommand{\ext}[2]{Ext^1(#1,#2)}

\title{Three-representation problem in Banach spaces}
%\timestamp
\author{P.~Kuchment}
%\author{A.~Pankov}
\dedicatory{Dedicated to the memory of Professors Carlos Berenstein and Mikhail Shubin,\\ dear friends and wonderful mathematicians}
\thanks{This work was supported in part by the NSF DMS grants \# 1517938 and \# 2007408.}
\begin{document}
\date{\today}
%\time{}
\maketitle
\begin{abstract}We provide the proof of a previously announced result that resolves the following problem posed by A.~A.~Kirillov. Let $T$ be a presentation of a group $\mathcal{G}$ by bounded linear operators in a Banach space $G$ and $E\subset G$ be a closed invariant subspace. Then $T$ generates in the natural way presentations $T_1$ in $E$ and $T_2$ in $F:=G/E$. What additional information is required besides $T_1, T_2$ to recover the presentation $T$? In finite-dimensional (and even in infinite dimensional Hilbert) case the solution is well known: one needs to supply a group cohomology class $h\in H^1(\mathcal{G},Hom(F,E))$. The same holds in the Banach case, if the subspace $E$ is complemented in $G$. However, every Banach space that is not isomorphic to a Hilbert one has non-complemented subspaces, which aggravates the problem significantly and makes it non-trivial even in the case of a trivial group action, where it boils down to what is known as the three-space problem. This explains the title we have chosen. A solution of the problem stated above has been announced by the author in 1976, but the complete proof, for non-mathematical reasons,  has not been made available. This article contains the proof, as well as some related considerations of the functor $Ext^1$ in the category \textbf{Ban} of Banach spaces.
\end{abstract}

%%%%%%%%%%%%%%%%%%%%%%
\section*{Introduction}
%%%%%%%%%%%%%%%%%%%%
Let us begin stating the problem addressed in this text, which (without the name we use here) can be found in \cite[pp. 117--118]{KirillovBook}.
%We will in particular explain the non-standard name three-representation problem used in the title.

Consider a representation $T$ of a group $\mathcal{G}$ in a Hilbert space $H$, with a closed invariant subspace $H_1\subset H$. The restriction of $T$ to $H_1$ we denote by $T_1$. Then the quotient- (or factor-) space $H_2:=H/H_1$ inherits a representation $T_2$. We are interested in additional information required for unique reconstruction of $T$ from known $T_1$ and $T_2$. The answer is well known (see the reference above): one needs to supply a cohomology class
\begin{equation}\label{E:coh}
h\in H^1(\mathcal{G},Hom(H_2,H_1)).
\end{equation}
Establishing this, one uses the fact that the subspace $H_1\subset H$ is complemented in $H$.
This construction also works if the spaces are Banach and the subspace $H_1$ is complemented in $H$ \cite{Moore,Pinc,KirillovBook}. However, in any Banach space that is not isomorphic to a Hilbert one, non-complemented subspaces exist \cite{LindComplement}. Thus, classifying short exact sequences
\begin{equation}\label{E:short}
0\mapsto H_1\mapsto H \mapsto H_2 \mapsto 0,
\end{equation}
or in other words studying the functor $Ext^1$ in the category \textbf{Ban} of Banach spaces becomes non-trivial, even in absence of a group action (see \cite[p. 118]{KirillovBook}). It is called the \textbf{three-space problem} in Banach space theory and has attracted a lot of attention in the last several decades (see, e.g. \cite{3space_book} and references therein). This motivates the name we use:\\

\textbf{The three-representation problem:}\\
\emph{What additional information is required for unique (up to equivalence) reconstruction of a (continuous) Banach representation $T$ from its sub-representation $T_1$ and quotient-representation $T_2$?}\\

As a footnote in \cite[p 118]{KirillovBook}) says, a solution of this problem was announced in the author's 1976 paper \cite{Kuch76}, although the complete proof has never appeared.

The goal of this text is to provide this proof, as well as related discussions of the functor $Ext^1$ in the category of Banach spaces. Although a long time has passed since, it looks like such a publication is still warranted\footnote{The construction here was triggered by the work \cite{Lind}, where in our terms non-triviality of $Ext^1(H,H)$ for a Hilbert space $H$ was shown. It also preceded by several years the closely related and well known ``twisted sum'' construction of \cite{KaltonPeck}, although the latter can treat also locally-convex cases.}.

It is clear that the case of a trivial group action has to be dealt with first. Thus the first Section provides the definition of the functor $Ext^1$ in the category \textbf{Ban} of Banach spaces and its representation through factor systems following \cite{Kuch76}. Section \ref{S:main} contains the statement and proof of the main result. An alternative (and wider known) representation of $Ext^1$ is considered in Section \ref{S:moreext}. A discussion of dimensionality of $Ext^1$ is provided in Section \ref{S:small}. Various remarks are collected in Section \ref{S:rem}.

%%%%%%%%%%%%%%%%%%%%%%
\section{$EXT^1$ in the category of Banach spaces}\label{S:ext}
%%%%%%%%%%%%%%%%%%%%
Homological techniques in theory of topological and in particular Banach vector spaces and Banach algebras have been used for quite a while (see, e.g., older works \cite{Brown,Mityagin,Palam,Kasparov}) and are well developed and frequently used now (e.g., \cite{Helem,HelemFA,Michor,Cigl,Cabello,CastMSb}). In particular, functors $Ext$ have been considered in the linear topological spaces setting (e.g., \cite{Palam,3space_book} and references therein). The ``three-space problem'' \cite{CCCFG,CastMSb,3space_book,CabelloHouston,Cast00,Cast01,Cast03,Cast07,Cast15,Cast17,Cast19,Cast96,Cast97,CastGroups00,KaltonPeck} addresses the question of what properties are inherited by a Banach space $G$ from its closed subspace $E$ and the corresponding quotient space $F:=G/E$ . The relation to the above three-representation problem was indicated in \cite{KirillovBook,Kuch76}.

The functor $Ext^1$, to put it simply, contains all information needed for recovery of a Banach space from its closed subspace and the corresponding quotient-space.

Let $X$ and $Y$ be Banach spaces. We denote by $L(X,Y)$ the space of all bounded linear operators mapping $X$ into $Y$, equipped with the operator norm topology.

\begin{definition}\label{D:extension}
For Banach spaces $E$ and $F$, a short exact sequence
\begin{equation}
0\mapsto E \mathop{\mapsto}\limits^i G \mathop{\mapsto}\limits^\sigma F \mapsto 0,
\end{equation}
where $G$ is a Banach space, $i\in L(E,G)$, and $\sigma\in L(G,F)$ is called an \textbf{extension of $E$ by $F$}.
\end{definition}
In other words, $i$ is an isomorphism of $E$ onto a closed subspace of $G$, and $\sigma$ is the quotient operator that ``identifies'' $G/i(E)$ with $F$.

There is a natural equivalence relation on the set of such extensions.
\begin{definition}\label{D:equiv}
Two extensions
\begin{equation}
0\mapsto E \mathop{\mapsto}\limits^{i_1} G_1 \mathop{\mapsto}\limits^{\sigma_1} F \mapsto 0
\end{equation}
and
\begin{equation}
0\mapsto E \mathop{\mapsto}\limits^{i_2} G_2 \mathop{\mapsto}\limits^{\sigma_2} F \mapsto 0
\end{equation}
are called \textbf{congruent}, if there exists an isomorphism $h\in L(G_1,G_2)$, such that the diagram
\begin{equation}\label{E:diagr}
\begin{tikzcd}
0 \arrow{r} &E \arrow{d}{I_E} \arrow{r}{i_1} &G_1 \arrow{d}{h} \arrow{r}{\sigma_1} &F \arrow{d}{I_F} \arrow{r} &0\\
0 \arrow{r} &E \arrow{r}{i_2} &G_2 \arrow{r}{\sigma_2} &F \arrow{r} &0
\end{tikzcd}
\end{equation}
is commutative, where $I_E$ (resp. $I_F$) is the identity operator on $E$ (resp. $F$).
\end{definition}

\begin{definition}\label{D:ext1}
The set of equivalence classes of extensions of $E$ by $F$ is denoted $\mathbf{Ext^1(E,F)}$.
\end{definition}

In homological algebra, where $Ext$ appears as the derived functor for $Hom$, there are several standard techniques of computing it. The two common ones involve injective and projective resolutions (if the category has enough injective and projective objects), see e.g. \cite{Cartan,maclane}. The category \textbf{Ban} of Banach spaces with bounded linear operators as morphisms is known to have plenty of injective and projective objects (see, e.g. \cite{3space_book,LindClasBan} and references therein). In \cite[Theorem 2]{Kuch76} the projective resolution approach was applied\footnote{Possibly, for the first time.} to \textbf{Ban} to represent $Ext^1$. There is, however, a third homological algebra technique of the so called factor systems \cite[Section 2 of Chaper 3]{maclane}, which was successfully developed in the Banach space case and applied to the three-representation problem in \cite{Kuch76}. The main idea here is rather simple: ``trivial'' extensions (i.e. those where $G\approx E\oplus F$, or equivalently $E$ is complemented in $G$) correspond to the situations when the quotient map $G\mapsto F$ has a linear bounded right inverse operator $p:F\mapsto G$. However, the well known Bartle-Graves theorem \cite{Bartle} (see also \cite[Section 1.0]{ZKKP}) claims existence of such right inverse, which is lacking only the additivity property. Thus one can try to use such mappings to classify extensions, which happens to be equivalent to a Banach space version of the factor systems in homological algebra. Correspondingly, the group cohomology class describing the extension of the representations would have its values in a class of non-linear mappings, rather than $L(F,E)$, with the ``degree of non-linearity'' being controlled by the element of $\ext{F}{E}$ corresponding to the particular extension of Banach spaces.

%%%%%%%%%%%%%%%%%%%%%
\subsection{Factor system of an extension}\label{S:factor}
%%%%%%%%%%%%%%%%

We describe now, following\footnote{With a misprint corrected.} \cite{Kuch76} a Banach space analog of an extension construction familiar from homological algebra (see, e.g. \cite[Section 2 of Chaper 3]{maclane}).

\bed\label{D:factors}
A \textbf{factor system} from a Banach space $F$ to a Banach space $E$ is a strongly continuous mapping
$$
\phi: F\times F \mapsto E,
$$
such that for any $x, x_i\in F$, scalar $\lambda$, and natural $n$
\begin{enumerate}
  \item $\phi(\lambda x, \lambda y)=\lambda \phi (x,y)$;
  \item $\phi(x,y)=\phi(y,x)$;
  \item $\phi(x,0)=0$;
  \item $\phi(x,y)+\phi(x+y,z)=\phi(y,z)+\phi(x,y+z)$;
  \item $\|\sum_{k=1}^{k=n-1} \phi (\sum_{i=1}^k x_i,x_{k+1})\|\leq C(\phi)\sum_{i=1}^n\|x_i\|$
\end{enumerate}
\ed
The factor systems clearly form a vector space $S(F,E)$, which is a Banach space with respect to the norm $\|\phi\|=\inf C(\phi)$, where infimum is taken over constants in (5). The operators from $L(E)$ and $L(F)$ act naturally on $S(F,E)$ from the left and right correspondingly.

One can wonder whether there are enough of such mappings for them to be useful. The considerations in the text below answer this question in positive.

\bed\label{D:R}
We denote by $R(F,E)$ the vector space of all continuous, homogeneous, bounded\footnote{Boundedness here means an estimate $\|h(x)\|\leq C\|x\|$ with some $C>0$ and all $x\in F$. } mappings $h$ from $F$ to $E$. (Bartle-Graves mappings are of this type.)

Equipped with the norm
\beq
\|h\|:= \sup_{x\neq 0}\frac{\|h(x)\|}{\|x\|},
\eeq
$R(F,E)$ becomes a Banach space.
\ed

It is clear that there is an isometric embedding $L(F,E)\subset R(F,E)$.

We need one more definition:
\bed\label{D:rho}
The operator $\rho: R(F,E)\mapsto S(F,E)$ acts as follows:
\beq
\rho h(x,y):=h(x+y)-h(x)-h(y).
\eeq
I.e., it checks the ``level of non-additivity'' of $h$.
\ed
Exactness of the following sequence is clear:
\beq
0\mapsto L(F,E) \subset R(F,E) \mathop{\mapsto}\limits^\rho S(F,E).
\eeq
The mapping $\rho$ is not surjective in general, so we will be interested in its co-kernel
\beq
S(F,E)/(\rho R(F,E)).
\eeq
It is not hard to check that this quotient space defines a functor with the properties identical to those expected from $Ext^1$. This is not an accident:
%%%%%%%%%%%%%%%%%%
\begin{theorem}\cite[Theorem 1]{Kuch76}\label{T:Fact_ext}
There is an isomorphism of the bi-functors
\beq\label{E:factorsystem}
\ext{E}{F}\approx S(F,E)/(\rho R(F,E)).
\eeq
\end{theorem}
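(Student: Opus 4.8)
The plan is to construct maps in both directions between $\ext{E}{F}$ and $S(F,E)/(\rho R(F,E))$ and check they are mutually inverse and natural. Starting from an extension $0\mapsto E \mathop{\mapsto}\limits^i G \mathop{\mapsto}\limits^\sigma F \mapsto 0$, I would invoke the Bartle–Graves theorem to choose a continuous homogeneous bounded section $p\colon F\to G$ of $\sigma$ with $p(0)=0$; normalizing $p$ to be homogeneous is possible since $\sigma$ is linear (replace $p(x)$ by $\|x\|\,p(x/\|x\|)$ if needed). Then $p(x+y)-p(x)-p(y)$ lies in $\ker\sigma=i(E)$, so $\phi_p(x,y):=i^{-1}\big(p(x+y)-p(x)-p(y)\big)$ is a well-defined map $F\times F\to E$; strong continuity and homogeneity are inherited from $p$, properties (2)–(4) are the cocycle/symmetry identities which hold identically for any such $\phi_p$, and the boundedness estimate (5) follows from the telescoping identity $\sum_{k=1}^{n-1}\phi_p(\sum_{i=1}^k x_i, x_{k+1}) = i^{-1}\big(p(\sum x_i) - \sum p(x_i)\big)$ together with $\|p\|<\infty$. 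So we get an element of $S(F,E)$. Changing the section $p$ to another section $p'$ changes $\phi_p$ by $\rho(i^{-1}(p-p'))$, and $i^{-1}(p-p')\in R(F,E)$ since $p-p'$ maps into $i(E)$; congruent extensions give the same class by transporting sections through the isomorphism $h$. Hence the class $[\phi_p]$ in $S(F,E)/(\rho R(F,E))$ depends only on the congruence class of the extension.

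Conversely, given $\phi\in S(F,E)$, I would build $G_\phi := E\oplus F$ as a set, with the twisted addition $(e_1,x_1)+(e_2,x_2):=(e_1+e_2+\phi(x_1,x_2),\,x_1+x_2)$ and scalar multiplication $\lambda(e,x):=(\lambda e,\lambda x)$; properties (1)–(4) are exactly what is needed for this to be a vector space (associativity from (4), commutativity from (2), the zero element from (3) plus homogeneity, and homogeneity of $+$ from (1)). Equip $G_\phi$ with the quasi-norm $\|(e,x)\|:=\|e\|+\|x\|$; the key point is that property (5) is precisely the condition guaranteeing that the associated norm $\|\cdot\|_\Sigma$ obtained by the standard infimum-over-chains procedure is equivalent to the quasi-norm and makes $G_\phi$ complete — this is where the factor-system axiom (5) earns its keep. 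Then $i\colon e\mapsto (e,0)$ and $\sigma\colon (e,x)\mapsto x$ give an extension of $E$ by $F$, and one checks that replacing $\phi$ by $\phi+\rho h$ yields a congruent extension via the isomorphism $(e,x)\mapsto (e+h(x),x)$, so the assignment $[\phi]\mapsto [G_\phi]$ is well-defined.

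Finally I would verify the two constructions are inverse to each other: starting from $\phi$, the obvious section $x\mapsto (0,x)$ of $G_\phi$ has $\rho$-defect exactly $\phi$, so one round trip is the identity; starting from an extension with section $p$, the map $G\to G_{\phi_p}$, $g\mapsto (i^{-1}(g-p(\sigma g)),\sigma g)$, is a congruence, giving the other. Bifunctoriality — that pullback along $L(F',F)$ and pushout along $L(E,E')$ correspond to the natural right/left actions on $S(F,E)/(\rho R(F,E))$ mentioned after Definition~\ref{D:factors} — then follows by tracking sections through the relevant commutative squares. I expect the main obstacle to be the completeness/norm-equivalence step in the reverse construction: showing that axiom (5) implies the twisted space $G_\phi$ is genuinely a Banach space under an equivalent norm (the Aoki–Rolewicz-type argument passing from the quasi-norm to a norm via chains), rather than merely a quasi-Banach space. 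Everything else is a matter of bookkeeping with the cocycle identities and homogeneity.
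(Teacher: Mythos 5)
Your proposal is correct and follows essentially the same route as the paper: a Bartle--Graves section $p$ of $\sigma$ yields the class of $\rho p$ in $S(F,E)/(\rho R(F,E))$, and conversely a factor system $\phi$ yields the twisted sum $E\times F$, with axiom (5) supplying exactly the norm equivalence on the copy of $E$ needed so that the convexified norm does not collapse and the space is complete (the paper realizes your ``infimum-over-chains'' norm as the Minkowski functional of the convex hull of the two unit balls). The only discrepancy is a sign convention: with your twisted addition $(e_1+e_2+\phi(x_1,x_2),\,x_1+x_2)$ the section $x\mapsto(0,x)$ has $\rho$-defect $-\phi$ rather than $\phi$ (the paper puts $-\phi(y_1,y_2)$ in the addition), which is harmless since $\phi\mapsto-\phi$ induces an automorphism of the quotient.
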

This theorem was stated without a proof in the brief announcement \cite{Kuch76}, so we prove it here. We will establish isomorphism of the two spaces for fixed $E$ and $F$. Its functoriality can be easily traced through the construction.
\begin{proof}
Let $\alpha\in \ext{F}{E}$ have a representative
\beq
\short{E}{i}{G}{\sigma}{F}.
\eeq
According to the Bartle-Graves theorem \cite{Bartle} (see also \cite[Section 1.0]{ZKKP}), there exists a mapping $p\in R(F,G)$ right inverse\footnote{Notice that while $p$ is continuous and homogeneous, it cannot be additive (and thus linear), unless the subspace $E$ is complemented in $G$, which is a trivial case of no interest here. Existence of a Bartle-Graves mapping $p$ thus implies existence of a non-additive homogeneous continuous bounded projector on $E$.} to $\sigma$. Then $\rho p\in S(F,G)$. In fact, it belongs to $S(F,E)\subset S(F,G)$, since
\beq
\sigma\rho p(x,y)=\sigma(p(x+y)-p(x)-p(y))=x+y-x-y=0.
\eeq
We thus can map $\alpha$ to the equivalence class of $\rho p$ in $S(F,E)/(\rho R(F,E))$. One needs to check correctness of this definition, i.e. independence of the choice of a representative of $\alpha$ and of the choice of $d$ (which is non-unique). This is a simple exercise, which we skip.

Thus, we have a mapping
\beq
\mu: \ext{F}{E} \mapsto S(F,E)/(\rho R(F,E)).
\eeq
Let us prove that it is bijective.

Suppose that $\mu \alpha=0$. In the previous notations, this means that $\rho p \in \rho R(F,E)$, i.e. that $\rho p= \rho h$ for some $h\in R(F,E)$.  Let us set $p_1:=p-h$. Then $\rho p_1=0$, so $p_1$ is in fact linear: $p_1\in L(F,G)$. Besides, $\sigma p_1=\sigma p-\sigma h= I_F -0=I_F$. This means that $p_1$ is a continuous linear right inverse to $\sigma$, and thus $E$ is complemented in $G$ and hence $\alpha=0$. This shows injectivity of $\mu$.

Let now $\phi\in S(F,E)$. We equip the space of pairs $(x,y)\in E\times F$ with component-wise multiplication by scalars and addition defined as follows:
\beq
(x_1,y_1)+(x_2,y_2):=(x_1+x_2-\phi(y_1,y_2), y_1+y_2).
\eeq
Using the properties of factor systems described in Definition \ref{D:factors}, one can check that this defines a vector space structure on $G:=E\times F$. A norm on $G$ is introduced by defining its unit ball as the convex hall of the union of the two sets:
\beq
S_E:=\{(x,0)\,|\,\|x\|_E\leq 1\}
\eeq
and
\beq
S_F=\{(0,y)\,|\,\|y\|_F\leq 1\}
\eeq
This makes $G$ a normed, and in fact Banach space. Indeed, let $(x_n,y_n)$ be a Cauchy sequence in $G$. By construction, the natural mapping $(x,y)\mapsto y$ from $G$ to $F$ is linear, surjective, and of norm not exceeding $1$. Thus, the sequence $\{y_n\}$ is fundamental (Cauchy) in $F$ and there exists a limit $\lim\limits_{n\to\infty}y_n = y\in F$.

Let us now check the Cauchy property for $\{x_n\}$ in $E$. We have the following equalities:
\beq\label{E:xn0}
(x_n,y_n)-(0,y_n)=(x_n+0-\phi(y_n,-y_n),0)=(x_n,0))
\eeq
and
\beq\label{E:0yn}
(0,y_n)-(0,y_m)=(-\phi(y_n,-y_m)),y_n-y_m).
\eeq
(We used here that $\phi(y,-y)=0$, as it follows from conditions (1) and (2) of Definition \ref{D:factors}.)

Since ${y_n}$ is a Cauchy sequence and $\phi$ is continuous, we get
\beq
\lim\limits_{n,m\to\infty}\phi(y_n,-y_m)=\phi(y,-y)=0.
\eeq
Thus, due to (\ref{E:0yn}), $\{(0,y_n)\}$ is a Cauchy sequence in $G$. Indeed,
\beq
(-\phi(y_n,-y_m),y_n-y_m)=(-\phi(y_n,-y_m),0)+(0,y_n-y_m).
\eeq
Here both terms tend to zero in $G$, since
\beq\label{E:normineq}
\|(x,0)\|_G\leq \|x\|_E \mbox{ and } \|(0,y)\|_G\leq \|y\|_F.
\eeq
The relation (\ref{E:xn0}) implies that $(x_n,0)$ is a fundamental (Cauchy) sequence in $G$. We want to conclude that then $\{x_n\}$ is a Cauchy sequence in the norm of $E$. In fact, the norm induced from $G$ is equivalent to the internal norm of $E$. Indeed, one inequality has already been established above: $\|(x,0)\|_G\leq \|x\|_E$. To establish an inequality in the opposite direction, it is thus sufficient to show that the intersection of the unit ball in $G$ with the subspace $E$ is bounded. So, let $(x,0)\in G$ belongs to the unit ball $B$ of $G$. This means that
\beq
(x,0)=(\tilde{x},0)+ \sum\limits_{i=1}^n (0, y_i),
\eeq
such that
\beq
\sum\|y_i\|_F+\|\tilde{x}\|_E \leq 1.
\eeq
Using the definition of the addition in $G$, one gets
\beq
(x,0)=(\tilde{x}+\sum_{k=1}^{n-1}\phi(\sum_{i=1}^{k}y_i,y_{k+1}),\sum\limits_{i=1}^n (0, y_i)).
\eeq
We thus get
\beq
\|x\|_E\leq \|\tilde{x}\|_E+C(\phi)\sum\|y_i\|_F\leq 1+C(\phi),
\eeq
and hence $x$ belongs to the ball in $E$ of radius $1+C(\phi)$. This proves equivalence of the norms.

We thus have established that the embedding of $E$ into $G$ is an isomorphism and Cauchy sequence $\{(x_n,y_n)\}$ in $G$ produces Cauchy sequences $\{x_n\}$ in $E$ and $\{y_n\}$ in $F$. If now the limits of the latter sequences are $x\in E$ and $y\in F$ correspondingly, then $(x,y)$ is the limit of $\{(x_n,y_n)\}$ in $G$. Indeed, we have
\beq
(x,y)-(x_n,y_n)= (x-x_n-\phi(-y_n,-y), y-y_n).
\eeq
Then
\beq
\lim (y-y_n)=0, \lim (x-x_n)=0, \mbox{ and } \lim\phi(-y_n,y)=\phi(-y,y)=0.
\eeq
Adding to this the inequality
\beq
\|(x,y)\|_G\leq \max (\|x\|_e,\|y\|_F),
\eeq
we conclude that $\lim (x_n,y_n)=(x,y)$, and thus $G$ is a Banach space.

Let us collect what we have done so far: starting with $\phi$, we constructed an extension $\alpha\in\ext{F}{E}$
\beq
\short{E}{}{G}{}{F}.
\eeq
It remains to show that the class in $S(F,E)/(\rho R(F,E))$ that corresponds to $\alpha$ is indeed $\phi$.
It is clear from the construction of the space $G$ that one can choose a right inverse to the surjection $G\mapsto F$ as follows: $p: y \to (0,y)$. Then $\rho p(y_1,y_2)$ is the first component in $E\times F$ of the expression
\begin{align}
(0,y_1+y_2)-(0,y_1)-(0,y_2)=(0, y_1+y_2)-(-\phi(y_1,y_2),y_1+y_2))\\
= (\phi(y_1,y_2)+\phi(y_1+y_2,-y_1-y_2),0) = (\phi(y_1,y_2),0).
\end{align}
This proves that $\rho p =\phi$, which is the needed surjectivity and finishes the proof of the theorem\footnote{Modulo checking functoriality.}.
\end{proof}

%%%%%%%%%%%%%%%%%%%
\subsection{Vector space structure and functoriality of $Ext^1$}
%%%%%%%%%%%%%

The way it was defined, $Ext^1(E,F)$ was just a set. However, one can equip it with a vector space structure such that $Ext^1$ becomes a bi-functor from the category \textbf{Ban} of Banach spaces with bounded linear operators as morphisms\footnote{Higher $Ext^i$ functors can be also defined for any natural number $i$ (see, e.g., \cite{Palam,Cast19,3space_book}).}, into the category of vector spaces with linear mappings as morphisms. Namely,
Theorem \ref{T:Fact_ext} enables one to transfer linear structure from $S(F,E)$ onto $\ext{F}{E}$, thus turning the latter into a vector space. It also provides a left action of $L(E)$ and a right one of $L(F)$ on $\ext{F}{E}$ and allows for an easy check of functoriality\footnote{One can also act by operators from $L(E,X)$ and $L(X,F)$, where $X$ is another Banach space. The results reside in $\ext{F}{X}$ and $\ext{X}{E}$ respectively.}. We will provide later on independent of Theorem \ref{T:Fact_ext} definitions of these action and vector structure.

We collect basic properties of $Ext^1$ as follows:
\begin{theorem}\cite{Kuch76,3space_book}\label{T:action}
\begin{enumerate}
   \item The multiplications of elements of $\ext{F}{E}$ by operators from $L(E,X)$ and $L(X,F)$ is bi-linear.
  \item Multiplications by elements of $L(E)$ and $L(F)$ are correspondingly representation  and anti-representations of these algebras in $\ext{F}{E}$.
  \item $Ext^1$ is a bi-functor (contra-variant and co-variant with respect the first and the second argument correspondingly) from the category of Banach spaces with bounded linear operators as morphisms into the category of vector spaces with linear mappings as morphism.
  \item $Ext^1$ is the derived functor for $Hom$.% (i.e., the functor $E,F \mapsto L(E,F)$).
\end{enumerate}
\end{theorem}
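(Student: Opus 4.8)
The plan is to transport all of the asserted structure from the factor-system model of Theorem~\ref{T:Fact_ext}, and to handle item~(4) separately through a projective resolution. (We write $Hom$ and $L$ interchangeably in \textbf{Ban}.)

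\emph{The actions at the level of factor systems.} For $a\in L(E,X)$ and $\phi\in S(F,E)$ set $(a\cdot\phi)(x,y):=a\bigl(\phi(x,y)\bigr)$, and for $b\in L(X,F)$ set $(\phi\cdot b)(x,y):=\phi(bx,by)$. First I would check that $a\cdot\phi\in S(F,X)$ and $\phi\cdot b\in S(X,E)$: properties (1)--(4) of Definition~\ref{D:factors} are preserved because $a$ and $b$ are additive and homogeneous, strong continuity is preserved because a composition of continuous maps is continuous, and (5) survives with $C(a\cdot\phi)\le\|a\|\,C(\phi)$ and $C(\phi\cdot b)\le\|b\|\,C(\phi)$, the latter using $\phi(bx_i,bx_{k+1})=\phi\bigl(b(\sum_{i\le k}x_i),bx_{k+1}\bigr)$. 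Next, since $a\circ\rho h=\rho(a\circ h)$ and $(\rho h)\cdot b=\rho(h\circ b)$ by linearity, and $a\circ h\in R(F,X)$, $h\circ b\in R(X,E)$ for $h\in R(F,E)$, both operations descend to the quotients by $\rho R$ and hence, through Theorem~\ref{T:Fact_ext}, to $Ext^1$. A short computation with a Bartle--Graves section identifies $a\cdot$ with push-out along $a$ and $\cdot b$ with pull-back along $b$; recording this also furnishes the description of the operations, and of the linear structure (Baer sum $=$ class of $\phi_1+\phi_2$, zero $=$ the split extension), that is promised to be independent of Theorem~\ref{T:Fact_ext}.

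\emph{Items (1)--(3).} The vector-space structure on $\ext{F}{E}$ is, by definition, the one carried over from $S(F,E)$ by Theorem~\ref{T:Fact_ext}, so there is nothing left to prove about it. Linearity of $(a,\phi)\mapsto a\cdot\phi$ in each argument, linearity of $\phi\mapsto\phi\cdot b$ in $\phi$, and homogeneity of $b\mapsto\phi\cdot b$ (property~(1) of Definition~\ref{D:factors}) are all visible pointwise. The only point that is not formal is additivity of $b\mapsto\phi\cdot b$: because $\phi$ is not bi-additive, $\phi\cdot(b_1+b_2)$ and $\phi\cdot b_1+\phi\cdot b_2$ agree only modulo $\rho R(X,E)$, and to see this I would factor $b_1+b_2$ through the diagonal $X\to X\oplus X$ and the codiagonal of $F$ and compare factor systems --- i.e.\ run the classical Baer-sum computation inside \textbf{Ban}. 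Item~(2) is then immediate: $(a_1a_2)\cdot\phi=a_1\cdot(a_2\cdot\phi)$ and $I_E\cdot\phi=\phi$ make $a\mapsto(\phi\mapsto a\cdot\phi)$ a representation of $L(E)$, while $(\phi\cdot b_1)\cdot b_2=\phi\cdot(b_1b_2)$ and $\phi\cdot I_F=\phi$ make $b\mapsto(\phi\mapsto\phi\cdot b)$ an anti-representation of $L(F)$; since the two actions commute, $\ext{F}{E}$ is an $L(E)$--$L(F)$-bimodule. Item~(3) is the compatibility of push-out and pull-back with composition, $(b_1b_2)^{*}=b_2^{*}b_1^{*}$ and $(a_1a_2)_{*}=(a_1)_{*}(a_2)_{*}$, holding on the nose from the formulas above, together with the additivity just discussed and the obvious commutation of push-outs with pull-backs; this is exactly the asserted bi-functoriality.

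\emph{Item (4).} Every Banach space $F$ is a quotient $q\colon\ell_1(\Gamma)\to F$ of a projective object ($\Gamma$ the unit ball of $F$), giving $0\to K\to P\xrightarrow{q}F\to 0$ with $P$ projective. Since projectivity of $P$ is equivalent to $\ext{P}{E}=0$, applying $L(-,E)$ produces an exact sequence $0\to L(F,E)\to L(P,E)\to L(K,E)\to\ext{F}{E}\to 0$, so that $\ext{F}{E}\cong L(K,E)/\Ran\!\bigl(L(P,E)\to L(K,E)\bigr)$; this is essentially \cite[Theorem~2]{Kuch76}. Hence $\ext{\,\cdot\,}{E}$ is computed as the first cohomology of $L(P_\bullet,E)$ for a projective resolution $P_\bullet\to F$, i.e.\ it is the first right derived functor of $Hom$, and that its connecting maps coincide with those of the Yoneda/extension picture is the standard comparison of \cite[Ch.~III]{maclane}. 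Conversely, once (4) is available it gives a second, purely homological proof of (1)--(3). The main obstacle, as I see it, is bookkeeping: one has to keep the extension-theoretic $Ext^1$ of Definition~\ref{D:ext1}, the factor-system $Ext^1$ of Theorem~\ref{T:Fact_ext}, and the derived-functor $Ext^1$ in sync, and in particular verify that the Baer structure and the two actions are carried to the same objects under every one of these identifications; after that, each individual assertion of the theorem is routine.
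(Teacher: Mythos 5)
Your proposal is correct and follows essentially the route the paper itself indicates: the paper gives no detailed proof of Theorem~\ref{T:action}, but derives items (1)--(3) by transferring the structure from $S(F,E)$ via Theorem~\ref{T:Fact_ext} (with the independent pushout/pullback and Baer-sum descriptions of Section~\ref{S:moreext} matching your identification of $a\cdot$ and $\cdot\,b$), and item (4) via the projective-resolution formula of Theorem~\ref{T:injproj}. Your flagging of the additivity of $b\mapsto\phi\cdot b$ modulo $\rho R(X,E)$ as the one genuinely non-pointwise verification is exactly the right place to locate the classical factor-system computation.
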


%%%%%%%%%%%%%%%%%%
%%%%%%%%%%%%%%%%%%%%%%
\section{The three-representation problem}\label{S:main}
%%%%%%%%%%%%%%%%%%%%
We are now well equipped to address the three-representation problem.

Let $T$ be a representation of a topological group $\mathcal{G}$ in a Banach space $G$, such that the mapping
\beq
(g,x)\in (\mathcal{G},G) \mapsto T(g)x\in G
\eeq
is continuous. Let also $E\subset G$ be a closed $T$-invariant subspace and $F:=G/F$ the corresponding quotient space. We denote by $T_1$ the restriction of the representation $T$ to $E$ and by $T_2$ the quotient-representation in $F$. We thus have a short exact sequence of Banach $\mathcal{G}$-moduli
\beq
\short{E}{ }{G}{ }{F}.
\eeq

%\end{document}
\bed\indent
\begin{enumerate}
\item
We define actions of an element $g$ of $\mathcal{G}$ on $S(F,E)$ and $R(F,E)$ as follows:
\begin{eqnarray}
(g \phi)(x,y)& := & T_1(g)\phi(T_2(g)^{-1}x,T_2(g)^{-1}y),\\
(g h)(x) & := & T_1(g)h(T_2(g)^{-1}x).
\end{eqnarray}
\item We denote by $Z^1(\mathcal{G},R(F,E))$ the space of $1$-\textbf{co-cycles} on $\mathcal{G}$ with values in $R(F,E)$, i.e. mappings $M:\mathcal{G}\mapsto R(F,E)$ such that
    \beq
    M(g_1g_2)=g_1M(g_2)+M(g_1)
    \eeq
    and such that the mapping $(g,x)\in (\mathcal{G}\times F)\mapsto M(g)x\in E$ is continuous.

\item The space of $1$-\textbf{co-boundaries}, i.e. co-cycles of the form $M(g)=g h-h$ for some $h\in R(F,E)$ is denoted by $B^1(\mathcal{G},R(F,E)))$.
\item The spaces $Z^1(\mathcal{G},S(F,E))$ and $B^1(\mathcal{G},S(F,E)))$ are defined analogously.
\item The cohomology space $H^1(\mathcal{G},R(F,E)))$ is defined as follows:
\beq
H^1(\mathcal{G},R(F,E))):=Z^1(\mathcal{G},R(F,E)))/B^1(\mathcal{G},R(F,E))).
\eeq
The space $H^1(\mathcal{G},S(F,E)))$  is defined analogously.
\item A mapping $d:S(F,E)\mapsto Z^1(\mathcal{G},S(F,E)))$ is defined as
\beq
(d\phi)(g)=g\phi-\phi.
\eeq
\item The following quotient mapping is naturally defined:
\beq
i:Z^1(\mathcal{G},S(F,E)))\mapsto Z^1(\mathcal{G},S(F,E)))/B^1(\mathcal{G},R(F,E))).
\eeq
\end{enumerate}
\ed

Let us have now an extension of Banach $\mathcal{G}$-moduli
\beq
\short{E}{ }{G}{ }{F}.
\eeq
According to Theorem \ref{T:Fact_ext}, if one forgets the group action, it corresponds to an element $\Phi\in S(F,E)/\rho R(F,E)$. Let also $p:F\mapsto G$ be a Bartle-Graves mapping (see the explanation below the formulation of Theorem \ref{T:Fact_ext}).

Let us now consider the co-cycle
\beq
(\psi(g))(x):=T(g)p(T_2(g)^{-1}x)-p(x)
\eeq
and its class $\Psi \in H^1(\mathcal{G},R(F,E))$ (one can check, using the properties of the Bartle-Graves map, that $\Psi$ indeed ends up having values in $L(F,E)$).

We thus have a pair
\beq\label{E:pairs}
(\Phi,\Psi)\in \ext{F}{E}\times H^1(\mathcal{G},R(F,E))
\eeq
that corresponds to our extension of Banach $\mathcal{G}$-moduli.

\begin{theorem}\label{T:main}
The mapping introduced above is a bijection between the set of congruence classes of Banach $\mathcal{G}$-moduli extensions of $E$ by $F$
and the pairs (\ref{E:pairs}) such that
\beq\label{E:degree}
id\Phi=i\rho\Psi.
\eeq
\end{theorem}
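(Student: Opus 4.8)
The plan is the usual four moves for a classification-by-cohomology statement: show the correspondence $\text{(extension)}\mapsto(\Phi,\Psi)$ descends to congruence classes; show its image satisfies $id\Phi=i\rho\Psi$; prove injectivity; and prove surjectivity onto the set cut out by that relation. For well-definedness I would note that two Bartle--Graves right inverses of $\sigma$ differ by a map with values in $i(E)$, i.e. by an element of $R(F,E)$, so $\Phi=[\rho p]$ does not depend on $p$ (exactly as in the proof of Theorem~\ref{T:Fact_ext}); replacing $p$ by $p+v$ with $v\in R(F,E)$ changes $\psi(g)(x)=T(g)p(T_2(g)^{-1}x)-p(x)$ by the coboundary $g\mapsto gv-v$, so $\Psi$ is well defined; and a congruence of $\mathcal{G}$-module extensions carries $p,\rho p,\psi$ to the corresponding data of the other extension, so congruent extensions yield the same pair. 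The relation $id\Phi=i\rho\Psi$ is then immediate: since $\rho p$ has values in $i(E)$, the operator $T(g)$ acts on $\rho p(\cdot,\cdot)$ through $T_1(g)$, and a one-line computation gives $\rho\psi(g)=g(\rho p)-\rho p=(d\,\rho p)(g)$, i.e. $\rho\psi=d\phi$ already at the level of cocycles, with $\phi=\rho p$.

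\emph{Surjectivity.} Given $(\Phi,\Psi)$ with $id\Phi=i\rho\Psi$, I would first use the relation to adjust the representatives so that $d\phi=\rho\psi$ holds on the nose (the relation says $d\phi-\rho\psi=d\,\rho k$ for some $k\in R(F,E)$, so replace $\psi$ by $\psi-dk$). Then I would build the twisted sum $G:=E\times F$ from $\phi$ as in the proof of Theorem~\ref{T:Fact_ext} and set
\[
T(g)(x,y):=\bigl(T_1(g)x+\psi(g)(T_2(g)y),\,T_2(g)y\bigr).
\]
Additivity of $T(g)$ for the twisted addition is precisely the identity $\rho\bigl(\psi(g)\circ T_2(g)\bigr)(y_1,y_2)=T_1(g)\phi(y_1,y_2)-\phi(T_2(g)y_1,T_2(g)y_2)$, which is $d\phi=\rho\psi$ rewritten; homogeneity, boundedness, and joint continuity of $(g,v)\mapsto T(g)v$ follow from the defining properties of $R(F,E)$ and of $Z^1(\mathcal{G},R(F,E))$; and $T(g_1g_2)=T(g_1)T(g_2)$ is exactly the cocycle identity $\psi(g_1g_2)=g_1\psi(g_2)+\psi(g_1)$. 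This $T$ restricts to $T_1$ on $E\times\{0\}$, induces $T_2$ on $F$, and the Bartle--Graves map $p_0(y)=(0,y)$ returns $\phi$ and $\psi$, hence the pair $(\Phi,\Psi)$.

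\emph{Injectivity, and where the work is.} Given two $\mathcal{G}$-module extensions with the same pair, transport one across the Banach-space congruence supplied by the common $\Phi$; one may then assume a single underlying $G$, with the same $i,\sigma$ and factor system $\phi$, carrying two $\mathcal{G}$-actions whose associated cocycles $\psi_1,\psi_2$ satisfy $\rho\psi_1=\rho\psi_2$ (forced by $d\phi=\rho\psi_i$), so $\eta:=\psi_1-\psi_2$ lies in $Z^1(\mathcal{G},L(F,E))$, and, because the $\Psi$'s agree, $\eta=dv$ for some $v\in R(F,E)$. On the other hand, the twisted-sum picture shows that every congruence of $\mathcal{G}$-module extensions has the form $(x,y)\mapsto(x+m(y),y)$, and such a map is equivariant exactly when $m\in L(F,E)$ and $dm=\eta$. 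Thus everything reduces to one point: the cocycle $\eta$, which is a coboundary in $R(F,E)$ and takes values in $L(F,E)$, must be a coboundary already in $L(F,E)$; equivalently, the $\mathcal{G}$-invariant factor system $\rho v\in\rho R(F,E)$ (invariant since $\rho(dv)=d(\rho v)=0$) must be $\rho$ of a $\mathcal{G}$-invariant element of $R(F,E)$. I expect this $\mathcal{G}$-equivariant lifting --- an equivariant refinement of the Bartle--Graves selection used throughout, asserting $(\rho R(F,E))^{\mathcal{G}}=\rho\bigl(R(F,E)^{\mathcal{G}}\bigr)$ --- to be the heart of the proof; granting it, if $v_0\in R(F,E)^{\mathcal{G}}$ satisfies $\rho v_0=\rho v$, then $m:=v-v_0\in L(F,E)$ satisfies $dm=dv=\eta$, and $(x,y)\mapsto(x+m(y),y)$ is the sought equivariant congruence, completing injectivity.
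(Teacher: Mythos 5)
Your treatment of well-definedness, of the compatibility relation (\ref{E:degree}) (cleaner than the paper's: by using a single Bartle--Graves map for both objects you get $\rho\psi=d\phi$ exactly at the cocycle level, whereas the paper computes with two possibly different maps $p,q$ and shows the discrepancy is a coboundary), and of surjectivity via the twisted action $T(g)(x,y)=(T_1(g)x+\psi(g)(T_2(g)y),T_2(g)y)$ is correct and substantially more detailed than the paper's own proof, which carries out only the computation establishing (\ref{E:degree}) and then declares injectivity and surjectivity ``straightforward.''

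There is, however, a genuine gap, and it sits exactly where you flag it: your injectivity argument rests on the equivariant lifting claim $(\rho R(F,E))^{\mathcal{G}}=\rho\left(R(F,E)^{\mathcal{G}}\right)$ --- equivalently, that a cocycle in $Z^1(\mathcal{G},L(F,E))$ which is a coboundary in $R(F,E)$ is already a coboundary in $L(F,E)$ --- and you only say you ``expect'' it rather than prove it. This is not a cosmetic omission, because your own reduction shows the claim is \emph{equivalent} to injectivity on a concrete family of extensions: for any $\mathcal{G}$-invariant $\phi_0=\rho v$, the twisted sum built from $\phi_0$ with the diagonal action $T(g)(x,y)=(T_1(g)x,T_2(g)y)$ (additive precisely because $\phi_0$ is invariant) has pair $(\Phi,\Psi)=(0,0)$, so injectivity forces it to be module-split, which forces an invariant homogeneous lift of $\phi_0$. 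For finite (or compact) $\mathcal{G}$ one gets the invariant lift by averaging $v$ over the group, but for a general topological group this is the nontrivial heart of the matter: it is the vanishing of the connecting map in the long exact cohomology sequence of $0\to L(F,E)\to R(F,E)\to\rho R(F,E)\to 0$, and neither your proposal nor the paper supplies an argument for it. Until this lemma is proved (or the pairs (\ref{E:pairs}) are quotiented by the resulting ambiguity), the injectivity half of the theorem is not established in your write-up.
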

Thus, cohomologies \textbf{with coefficients in spaces of nonlinear} operators arise. The ``degree of nonlinearity'' is controlled by the Banach space extension congruence class $\Phi$ through the condition (\ref{E:degree}). In particular, if $E$ has a direct complement in $G$, then $\Phi=0$ and thus a class from $H^1(\mathcal{G}, L(F,E))$ arises, which is well known (e.g., \cite{KirillovBook,Moore}).
\begin{proof}
As the above constructions show, both objects $\Phi$ and $\Psi$ arise from some Bartle-Graves maps from $F$ to $G$, while apriori these maps
might be different. So, let us assume that a Bartle-Graves map $p:F\to G$ gives rise to $\Psi$, while $q:F \to G$ leads to $\Phi$. One
computes then that
\begin{align}
\Psi(g,x)=T(g)p(T_2(g)^{-1}x) - p(x),\\
\Phi(x,y)=q(x+y)-q(x)-q(y).
\end{align}
The direct calculation shows now that
\beq\label{E:B1}
\begin{split}
d\Phi (g)(x,y)=T(g)p(T_2(g)^{-1}x+T_2(g)^{-1}y)+p(x)+p(y)\\
-(p(x+y)+T(g)p(T_2(g)^{-1}x)+T(g)p(T_2(g)^{-1}y)).
\end{split}
\eeq
One calculates that the same expression arises for $\rho\Psi$, with replacement of $p$ with $q$. Then subtraction of the two expressions shows that
$d\Phi-\rho\Phi$ has the same form (\ref{E:B1}), with $(p-q)$ arising where $p$ used to be. Since by the definition of the Bartle-Graves maps
their difference $(p-q)$ maps $F$ into $E$, we conclude that $$T(g)\circ(p-q)=T_1(g)\circ(p-q).$$ This implies a formula analogous to (\ref{E:B1})
with a mapping $(p-q)\in R(F,E)$ and replacement of the representation $T$ by its sub-representation $T_1$. Then it is just the matter of
direct observation of the result to see that that $i(d\Phi-\rho\Psi)$ belongs to $B^1(\mathcal{G},R(F,E))$.

The injectivity and surjectivity of the constructed correspondence are straightforward.
\end{proof}

%%%%%%%%%%%%%%%%%%%%%%
\section{More on $EXT^1$ in the category of Banach spaces}\label{S:moreext}
%%%%%%%%%%%%%%%%%%%%

As we have mentioned before, the factor system representation (\ref{E:factorsystem}) provides a vector space structure on $\ext{F}{E}$ and enables one to introduce actions of operators from $L(E)$ and $L(F)$ there. It also proves the bi-functor property of $Ext^1$. However, both of these can be done without  (\ref{E:factorsystem}), which we illustrate below. Both approaches lead to the same structures.

%%%%%%%%%%%%%%
\subsection{Actions on $Ext^1$}

We start with introducing action on $Ext^1$ of operator between appropriate spaces (using the socalled pushout and pullback constructions).  Namely, let $X$ be a Banach space, $\alpha\in\ext{F}{E})$, $T\in L(E,X)$, and $S\in L(X,F)$. We will define now  elements $T\alpha\in\ext{F}{X}$ and $\alpha S\in \ext{X}{E}$.

Let $\alpha$ have as a representative the extension
\beq
\short{E}{i}{G}{\sigma}{F}.
\eeq
Consider the graph of $T$ in $G\oplus X$:
\beq
\Gamma:=\{(x,y)\in G\oplus X \, |\, x\in i(E), Ti^{-1}x=y\}.
\eeq
We define
\beq
G_1:=(G\oplus X)/\G.
\eeq
The natural embedding $X\mapsto G\oplus X$ induces an imbedding $i_1:X\mapsto G_1$. Analogously, the superposition of the natural projection $G\oplus X \mapsto G$ with $\sigma$ induces a surjection $\sigma_1: G_1\mapsto F$. Now $T\alpha\in\ext{F}{X}$ is defined as the equivalence class of the extension
\beq
\short{X}{i_1}{G_1}{\sigma_1}{F}.
\eeq

Let now $S\in L(X,F)$. We define the Banach space $G^1$ as the closed subspace in $G\oplus X$ as follows:
\beq
G^1:=\{(x,y)\in G\oplus X\, | \, \sigma x=S y\}.
\eeq
Then $G^1$ contains $i(E)\oplus {0}$, which defines the injection $i^1:E\mapsto G^1$. The natural projection of $G\oplus X$ onto $X$ produces a surjection $\sigma^1:G^1\mapsto X$. It is easy to check that the following sequence is exact:
\beq
\short{E}{i^1}{G^1}{\sigma^1}{X}.
\eeq
Its congruence class defines the element $\alpha S\in \ext{X}{E}$.

%%%%%%%%%%%%%%%%%%%%%%%
\subsection{More on the vector space structure and functoriality of $Ext^1$}
%%%%%%%%%%%%%%%

Let us now equip $\ext{F}{E}$ with a vector space structure.
Having two extensions $\alpha_1, \alpha_1\in\ext{F}{E}$, one can define the direct sum:
\beq
\short{E\oplus E}{\i_1\oplus \i_2}{G_1\oplus G_2}{\sigma_1\oplus \sigma_2}{F\oplus F}.
\eeq
This is not what we need, though, since we want to end up in $\ext{F}{E}$.
For doing so\footnote{A Bauer construction.}, we introduce two auxiliary operators $\nabla:E\oplus E\mapsto E$ and $\Delta: F\mapsto F\oplus F$ as follows:
\beq
\nabla(x,y):=x+y, \Delta(x):=(x,x).
\eeq

\bed\label{D:sum}
The sum of two extensions $\alpha_1,\alpha_2 \in \ext{F}{E}$ is
\beq
\nabla(\alpha_1\oplus \alpha_2)\Delta.
\eeq
\ed
\brem
It is easy to check that under this addition, $\ext{F}{E}$ becomes an abelian semi-group with the zero element provided by the class of ``trivial'' (i.e. split) extension
\beq
\short{E}{i}{E\oplus F}{\sigma}{F},
\eeq
where $i$ and $\sigma$ are the natural embedding into the first and $\sigma$ natural projection onto the second component correspondingly.
\erem
Multiplication of an element $\alpha\in\ext{F}{E}$ by a scalar $\lambda$ can be defined as $(\lambda I_E)\alpha$.

The result is the following well known proposition ( see, e.g. \cite{Kuch76,3space_book}):
\begin{theorem}
The above definitions turn $\ext{F}{E}$ into a vector space.
\end{theorem}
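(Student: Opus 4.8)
The plan is to transport the linear structure across the factor-system description of Theorem~\ref{T:Fact_ext}, rather than to check the axioms by hand on extensions. Since $S(F,E)$ is a Banach space and $\rho\colon R(F,E)\to S(F,E)$ is linear, $\rho R(F,E)$ is a linear subspace and the quotient $Q:=S(F,E)/\rho R(F,E)$ is tautologically a vector space. Let $\mu\colon\ext{F}{E}\to Q$ be the bijection of Theorem~\ref{T:Fact_ext}. First I would show that $\mu$ carries the addition of Definition~\ref{D:sum} to the addition of $Q$, and the scalar action $\alpha\mapsto(\lambda I_E)\alpha$ to multiplication by $\lambda$ in $Q$. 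Granting this, $\mu$ is a bijective intertwiner, so $\ext{F}{E}$ inherits the vector-space structure of $Q$; in particular the split extension of the Remark above is the neutral element, additive inverses exist, and the associativity, commutativity, distributivity and unit laws hold.

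The intertwining rests on three compatibilities, each an instance of the functoriality of $\mu$ noted after Theorem~\ref{T:Fact_ext} and each proved by the same recipe: pick a Bartle--Graves section of the new extension built from a section $p$ of the old one, then apply $\rho$. First, for $T\in L(E,X)$ with $\mu\alpha=[\phi]$ one gets $\mu(T\alpha)=[T\circ\phi]$; concretely, starting from $0\to E\xrightarrow{i}G\xrightarrow{\sigma}F\to 0$ and forming the pushout $G_1=(G\oplus X)/\G$, the map $y\mapsto[(p(y),0)]$ is a section of $\sigma_1$, its $\rho$-image is the class of $(\rho p(\cdot,\cdot),0)$, and reading this through the identification $i_1$ of $X$ with a subspace of $G_1$ turns $\phi=i^{-1}\circ\rho p$ into $T\circ\phi$. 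Second, for $S\in L(X,F)$ one gets $\mu(\alpha S)=\bigl[(u,v)\mapsto\phi(Su,Sv)\bigr]$; on $G^1=\{(x,y)\in G\oplus X:\sigma x=Sy\}$ the map $u\mapsto(p(Su),u)$ is a section of $\sigma^1$ whose $\rho$-image is the class of $\bigl(\phi(Su,Sv),0\bigr)$, i.e.\ the reparametrized factor system. Third, if $\mu\alpha_j=[\phi_j]$ ($j=1,2$) then the direct-sum extension over $F\oplus F$ with kernel $E\oplus E$ corresponds to $\phi_1\oplus\phi_2$, namely $\bigl((x_1,x_2),(y_1,y_2)\bigr)\mapsto\bigl(\phi_1(x_1,y_1),\phi_2(x_2,y_2)\bigr)$, with $(y_1,y_2)\mapsto(p_1y_1,p_2y_2)$ as section. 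In passing one checks that $T\circ\phi$ and $\phi(S\cdot,S\cdot)$ satisfy (1)--(5) of Definition~\ref{D:factors}, with constants at most $\|T\|\,C(\phi)$ and $\|S\|\,C(\phi)$.

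Combining the three, pulling $\phi_1\oplus\phi_2$ back along $\Delta\colon F\to F\oplus F$ gives $(x,y)\mapsto\bigl(\phi_1(x,y),\phi_2(x,y)\bigr)$ with values in $E\oplus E$, and pushing it forward along $\nabla\colon E\oplus E\to E$ gives $(x,y)\mapsto\phi_1(x,y)+\phi_2(x,y)$; hence $\mu\bigl(\nabla(\alpha_1\oplus\alpha_2)\Delta\bigr)=[\phi_1+\phi_2]=\mu\alpha_1+\mu\alpha_2$. Taking $T=\lambda I_E$ in the first compatibility gives $\mu\bigl((\lambda I_E)\alpha\bigr)=[\lambda\phi]=\lambda\,\mu\alpha$. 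Thus $\mu$ is a linear bijection onto the vector space $Q$, which proves the theorem.

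The hard part is really just the first two compatibilities: that pushout and pullback correspond under $\mu$ to $\phi\mapsto T\circ\phi$ and $\phi\mapsto\phi(S\cdot,S\cdot)$. These run exactly as in the proof of Theorem~\ref{T:Fact_ext} (choose a section, apply $\rho$, chase the identifications), the only delicate spot being the sign in the defining relation of $\G$: it must be read as $Ti^{-1}x=-y$ so that pushout along $T$ induces $\phi\mapsto T\circ\phi$ rather than $\phi\mapsto-T\circ\phi$, which is precisely the normalization for which $I_E$ acts as the identity, as it must by Theorem~\ref{T:action}. One can also avoid Theorem~\ref{T:Fact_ext} and verify the Baer-sum axioms directly in the style of \cite[Ch.~III]{maclane}, using the standard lemmas that pushout and pullback are functorial and commute ($(T\alpha)S=T(\alpha S)$) and that $\oplus$ is additive in each argument; that route is of comparable length and yields the same linear structure.
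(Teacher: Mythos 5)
Your proof is correct, but note that the paper itself offers no proof of this statement: it is presented as a ``well known proposition'' with a pointer to \cite{Kuch76,3space_book}, and the only hint of an argument is the earlier remark (after Theorem~\ref{T:Fact_ext}) that the factor-system bijection ``enables one to transfer linear structure from $S(F,E)$ onto $\ext{F}{E}$,'' together with the unproved assertion in Section~\ref{S:moreext} that ``both approaches lead to the same structures.'' Your route makes exactly that transfer precise: by computing that pushout, pullback and direct sum correspond under $\mu$ to $\phi\mapsto T\circ\phi$, $\phi\mapsto\phi(S\cdot,S\cdot)$ and $\phi_1\oplus\phi_2$, you get the vector-space axioms for free from the quotient $S(F,E)/\rho R(F,E)$ and, as a bonus, you actually prove the compatibility of the two definitions that the paper only asserts. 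The alternative you mention at the end --- verifying the Baer-sum axioms directly in the style of \cite[Ch.~III]{maclane} --- is presumably what the citation to \cite{3space_book} is meant to cover; it avoids Theorem~\ref{T:Fact_ext} but requires the standard pushout/pullback lemmas and is no shorter. Your observation about the sign in the definition of $\G$ is a genuine catch, not a pedantic one: with the graph taken literally as $Ti^{-1}x=y$, the class $[(i(e),0)]$ in $G_1=(G\oplus X)/\G$ equals $i_1(-Te)$, so $I_E$ would act as $-1$ on $\ext{F}{E}$, contradicting part (2) of Theorem~\ref{T:action} and the unitality used in the lemma preceding Theorem~\ref{T:ideal}; the anti-diagonal normalization $Ti^{-1}x=-y$ is the one that makes everything consistent. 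The only small things you lean on without spelling out are the well-definedness of $T\alpha$ and $\alpha S$ on congruence classes and the functoriality of $\mu$, both of which the paper also leaves as routine; given that, your argument is complete.
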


%%%%%%%%%%%%%%%%
\subsection{$Ext$ and injective/projective resolvents}
%%%%%%%%%%%%%%%%%

The category \textbf{Ban} of Banach spaces is well known to have plenty of injective and projective objects  (e.g. $l_\infty$- and $l_1$- spaces), which allows for descriptions\footnote{See also Theorem 2 in \cite{Kuch76}.} of functors $Ext^i$ through the corresponding resolvents \cite{3space_book}. Indeed, any Banach space $E$ can be isomorphically embedded into an $\l_\infty(\G)$ space (choosing, for instance, as the set $\G$ the unit ball in $E^*$)
\beq
\short{E}{i_1}{l_\infty(\G_1)}{\sigma_1}{M}.
\eeq
 Any Banach space $F$ can be represented as a quotient space of $l_1(\G)$ (choosing, for instance, as the set $\G$ the unit ball in $F$)
\beq
\short{N}{i_2}{l_1(\G_2)}{\sigma_2}{F}.
\eeq
These representations lead to the following well known result, which we will use later in this text:
\begin{theorem}\label{T:injproj}(see \cite[Theorem 2]{Kuch76} for (\ref{E:proj}), as well as \cite{3space_book} and references therein.)
The following functor isomorphisms hold:
\begin{align}
\ext{F}{E}\approx L(N,E)/(L(l_1(\G_2)\circ i_2) \mbox{ and }\label{E:proj}\\
\ext{F}{E}\approx L(F,M)/(\sigma_1\circ L(F,l_\infty(\G_1))).
\end{align}
\end{theorem}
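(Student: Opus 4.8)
The plan is to derive both isomorphisms by the standard ``dimension shift'' along the two resolution sequences, implemented through the pushout and pullback constructions of Section~\ref{S:moreext}. I would do the projective case (\ref{E:proj}) in full and then obtain the injective one dually. Write $\epsilon_2\in\ext{F}{N}$ for the congruence class of the fixed sequence $\short{N}{i_2}{l_1(\G_2)}{\sigma_2}{F}$, and for $T\in L(N,E)$ put $\mu(T):=T\epsilon_2\in\ext{F}{E}$, the pushout of $\epsilon_2$ along $T$. Concretely, $\mu(T)$ is represented by $\short{E}{}{(l_1(\G_2)\oplus E)/\G_T}{}{F}$ with $\G_T:=\{(i_2 n,-Tn)\mid n\in N\}$; this $\G_T$ is a closed subspace of $l_1(\G_2)\oplus E$ since $n\mapsto(i_2 n,-Tn)$ is a bounded injection with closed range ($i_2$ being an embedding), so the quotient is Banach, with embedding $e\mapsto[(0,e)]$ of $E$ and quotient map $[(p,e)]\mapsto\sigma_2 p$ onto $F$; exactness is a direct check. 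By bilinearity of the pushout action in its operator argument (Theorem~\ref{T:action}(1)), $\mu\colon L(N,E)\to\ext{F}{E}$ is linear.

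The substance of (\ref{E:proj}) is that $\mu$ is surjective with kernel exactly $L(l_1(\G_2),E)\circ i_2$. For surjectivity, I would take an arbitrary $\short{E}{j}{G}{\tau}{F}$ representing $\alpha\in\ext{F}{E}$; since $l_1(\G_2)$ is projective in \textbf{Ban}, the surjection $\sigma_2$ lifts along $\tau$ to some $\tilde\sigma\in L(l_1(\G_2),G)$ with $\tau\tilde\sigma=\sigma_2$ (lift the unit vectors to bounded preimages, using the open mapping theorem). Then $\tau\tilde\sigma i_2=\sigma_2 i_2=0$, so $\tilde\sigma i_2$ factors as $j\circ T$ for a unique $T\in L(N,E)$, and comparing the pushout of $\epsilon_2$ along $T$ with $\alpha$ (the standard $3\times3$ diagram chase, plus the five lemma) gives $\mu(T)=\alpha$. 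For the kernel: if $T=S\circ i_2$ with $S\in L(l_1(\G_2),E)$, then $[(p,e)]\mapsto Sp+e$ is a well-defined bounded retraction of the pushout onto $E$, so $\mu(T)=0$; conversely, a bounded retraction $r$ onto $E$ yields $S:=\big(p\mapsto r[(p,0)]\big)\in L(l_1(\G_2),E)$ with $S\circ i_2=T$ (because $[(i_2 n,0)]=[(0,Tn)]$ in the pushout). Hence $\mu$ descends to $L(N,E)/(L(l_1(\G_2),E)\circ i_2)\approx\ext{F}{E}$.

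The injective case is the mirror image. Let $\epsilon_1\in\ext{M}{E}$ be the class of $\short{E}{i_1}{l_\infty(\G_1)}{\sigma_1}{M}$, and for $U\in L(F,M)$ put $\nu(U):=\epsilon_1 U\in\ext{F}{E}$, the pullback of $\epsilon_1$ along $U$, represented by $\short{E}{}{G^U}{}{F}$ with $G^U:=\{(y,x)\in l_\infty(\G_1)\oplus F\mid\sigma_1 y=Ux\}$ — a closed, hence Banach, subspace, with embedding $e\mapsto(i_1 e,0)$ and quotient map $(y,x)\mapsto x$. Again $\nu$ is linear by Theorem~\ref{T:action}(1). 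Using injectivity of $l_\infty(\G_1)$ in \textbf{Ban} (Hahn--Banach coordinatewise) one checks, symmetrically: $\nu(U)=0$ iff the pullback splits iff $U=\sigma_1\circ V$ for some $V\in L(F,l_\infty(\G_1))$; and $\nu$ is onto, because for any $\short{E}{j}{G}{\tau}{F}$ the map $i_1$ extends to some $\bar\imath\in L(G,l_\infty(\G_1))$ with $\bar\imath j=i_1$, whence $\sigma_1\bar\imath$ annihilates $j(E)$ and equals $U\tau$ for a unique $U\in L(F,M)$, and $g\mapsto(\bar\imath g,\tau g)$ exhibits the given extension as congruent to the pullback, so $\nu(U)=\alpha$. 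This gives $\ext{F}{E}\approx L(F,M)/(\sigma_1\circ L(F,l_\infty(\G_1)))$. Naturality of both isomorphisms in $E$ and $F$, and independence of the choice of index sets $\G_1,\G_2$, follow from functoriality of pushout/pullback via the usual comparison-of-resolutions argument, which I would only sketch.

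The hard part — really the only non-formal ingredient, and the actual content of the theorem — is the vanishing $\ext{l_1(\G_2)}{E}=0=\ext{F}{l_\infty(\G_1)}$, i.e. projectivity of the $l_1$-spaces and injectivity of the $l_\infty$-spaces in \textbf{Ban}; everything else is diagram chasing plus the routine verification that the pushout and pullback carry the correct Banach-space structure, a point entirely parallel to the norm estimates already made in the proof of Theorem~\ref{T:Fact_ext}. The one thing to be careful about is that ``surjection'' in \textbf{Ban} must be read as a bounded \emph{onto} operator (equivalently, after an admissible renorming, a metric surjection), so that the open mapping theorem is available when lifting operators out of $l_1(\G_2)$. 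As an alternative, but less self-contained, route, both isomorphisms also drop out of the long exact $\mathrm{Ext}$-sequence attached to the two resolutions, once one appeals to Theorem~\ref{T:action}(4).
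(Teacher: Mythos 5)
The paper itself gives no proof of Theorem \ref{T:injproj}: it is recorded as a known result with pointers to \cite{Kuch76} and \cite{3space_book}, so there is no in-text argument to compare yours against. Judged on its own, your proof is correct and is the standard dimension-shifting argument implemented exactly with the pushout and pullback constructions of Section \ref{S:moreext}: $\mu(T)=T\epsilon_2$ along the projective presentation and $\nu(U)=\epsilon_1 U$ along the injective one, surjectivity coming from projectivity of $l_1(\G_2)$ (lift the unit vectors with a uniform bound via the open mapping theorem) resp.\ injectivity of $l_\infty(\G_1)$ (coordinatewise Hahn--Banach), and the kernel identified with $L(l_1(\G_2),E)\circ i_2$ resp.\ $\sigma_1\circ L(F,l_\infty(\G_1))$ through the ``splits iff a bounded retraction/section exists'' equivalence. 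The verifications you single out are the right ones: closedness of $\G_T$ because $i_2$ is an isomorphism onto its range, and the open mapping theorem again to invert the middle arrow in the five-lemma comparison. Two cosmetic remarks only: your graph $\G_T=\{(i_2n,-Tn)\}$ carries the opposite sign to the graph used in the paper's pushout in Section \ref{S:moreext}, which merely replaces $T\epsilon_2$ by $(-T)\epsilon_2$ and is immaterial for the isomorphism; and an alternative, more in the spirit of the paper's own machinery, would be to route everything through the factor-system picture of Theorem \ref{T:Fact_ext} and Bartle--Graves selections, but since the statement is formulated relative to the two fixed resolutions, your pushout/pullback route is the natural one and matches the level of rigor (naturality and independence of $\G_1,\G_2$ only sketched) at which the paper treats such statements.
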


%%%%%%%%%%%%%%%%%%%%%%%
\section{Can $\ext{F}{E}$ be of a positive finite dimension?}\label{S:small}
%%%%%%%%%%%%%%%%%%%%%%%%

We do not know\footnote{Albeit experts on Banach space theory might know the answer to this question.} whether there exist (infinite dimensional) Banach spaces $E$ and $F$ such that $0<\dim(\ext{F}{E})<\infty$. We describe below some results (also stated without a proof in \cite{Kuch76}) that provide several classes of spaces where such possibility is excluded.

\bed\label{D:bap}

A Banach space $E$ has \textbf{approximation property} if for any compact set $K\subset E$ there is a sequence of (bounded)
linear operators of finite rank on $E$ that converges to the identity uniformly on $K$.

A Banach space $E$ has the \textbf{bounded approximation property (BAP)} if there exists a net $K_\beta$ of uniformly bounded in norm finite-rank operators in $E$ that uniformly on compacts approximates the identity operator $I_E$.
\ed
One can find discussions of these properties, for instance, in \cite{FJP,Grot}.

\begin{theorem}\cite{Grot}\label{T:Grot}
A reflexive space that has the \textbf{AP} also has the \textbf{BAP}.
\end{theorem}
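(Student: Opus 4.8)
The plan is to exploit the interplay between reflexivity and the approximation property in the dual pairing $\langle E, E^*\rangle$, using the classical fact that the approximation property of a Banach space is equivalent to a density statement about finite-rank operators. First I would recall Grothendieck's characterization: $E$ has the \textbf{AP} if and only if for every Banach space $X$ the finite-rank operators are dense in $L(X,E)$ in the topology of uniform convergence on compact sets; equivalently, the natural map $E^* \widehat\otimes E \to L(E,E)$ (or into the compact operators) has the appropriate injectivity/density behavior. The key point is to produce from the unbounded approximating operators furnished by \textbf{AP} a single uniformly bounded net doing the same job, and reflexivity is what makes this upgrade possible.

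The main steps, in order, would be: (1) Fix the identity $I_E$ and, using \textbf{AP}, obtain for each compact $K \subset E$ and each $\varepsilon>0$ a finite-rank operator $S$ with $\sup_{x\in K}\|S x - x\| < \varepsilon$; the issue is that $\|S\|$ may blow up as $K$ grows and $\varepsilon$ shrinks. (2) Reinterpret the approximation condition weakly: since $E$ is reflexive, bounded subsets of $E$ (and of $L(E,E)$ via a suitable topology) are relatively weakly compact, so one can pass to weak limits. Concretely, consider the finite-rank operators as elements of $L(E,E)$ and look at the question of whether $I_E$ lies in the closed convex hull, in an appropriate weak topology, of a \emph{bounded} set of finite-rank operators. (3) Invoke a Hahn--Banach / bipolar argument: if no uniformly bounded net of finite-rank operators approximated $I_E$ uniformly on compacts, there would be a continuous linear functional on the relevant space separating $I_E$ from every ball of finite-rank operators; such a functional is represented (using reflexivity of $E$, hence of the trace-class-type predual object that appears) by an element that must annihilate all finite-rank operators yet not $I_E$ — contradicting \textbf{AP}, which says precisely that such a separating functional cannot exist. (4) Having placed $I_E$ in the closed convex hull of a bounded set of finite-rank operators for the topology of uniform convergence on compacta, extract by Mazur's theorem an actual net (sequence of convex combinations, still finite-rank, still uniformly bounded) converging uniformly on compacts to $I_E$; that net witnesses \textbf{BAP}.

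The hard part, and the step I would expect to be the genuine obstacle, is step (3): correctly identifying the locally convex space and its dual in which the separation argument takes place, and verifying that reflexivity of $E$ is exactly the hypothesis that makes the relevant dual consist of ``nice'' (trace-like) functionals so that annihilating all finite-rank operators forces annihilating $I_E$. In Grothendieck's original treatment this is the passage through $E^* \widehat\otimes E$ and its dual $L(E,E^{**}) = L(E,E)$ (the last equality being where reflexivity enters), together with the fact that for reflexive $E$ the natural norm on the approximating operators can be controlled. Everything else — the Hahn--Banach separation, Mazur's convexification, extracting the bounded net — is routine once the correct functional-analytic framework is set up. In the write-up I would therefore spend most of the effort pinning down this framework precisely and cite \cite{Grot} (and \cite{FJP}) for the underlying tensor-product identities, presenting the reflexivity step as the only nontrivial input.
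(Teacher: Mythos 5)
First, a point of comparison: the paper offers no proof of this statement. Theorem \ref{T:Grot} is quoted from Grothendieck's memoir \cite{Grot} and used as a black box, so there is no internal argument to measure yours against; your proposal has to stand on its own as a proof of the classical result.

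Your framework --- the duality between $L(E,E)$ equipped with the topology of uniform convergence on compact sets and the trace-type functionals $\varphi_u(T)=\sum_n\langle x_n^*,Tx_n\rangle$ with $\sum_n\|x_n^*\|\,\|x_n\|<\infty$, followed by Hahn--Banach separation --- is the correct one, but step (3) as written contains a genuine error that conceals the entire content of the theorem. If $I_E$ fails to lie in the closure (for uniform convergence on compacta) of the ball $\{S \mbox{ finite rank}:\ \|S\|\le\lambda\}$, which is already convex so no Mazur step is needed, then separation produces a continuous functional $\varphi$ with $\sup\{\mathrm{Re}\,\varphi(S):\ S \mbox{ finite rank},\ \|S\|\le\lambda\}\le 1<\mathrm{Re}\,\varphi(I_E)$; that is, a functional \emph{bounded} on finite-rank operators by a multiple of the operator norm, not one that \emph{annihilates} them, as you assert. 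A functional annihilating all finite-rank operators is killed at $I_E$ immediately by the \textbf{AP} alone, since the \textbf{AP} places $I_E$ in the closure of the finite-rank operators for this very topology; if that were the issue, the theorem would be trivial and reflexivity would play no role. The actual content is the quantitative statement: every $u=\sum_n x_n^*\otimes x_n$ with $\sum_n\|x_n^*\|\,\|x_n\|<\infty$ whose functional satisfies $|\varphi_u(S)|\le\|S\|$ for all finite-rank $S$ must have $|\sum_n x_n^*(x_n)|\le 1$. This is exactly where reflexivity enters non-trivially: in Grothendieck's treatment through a construction with compact convex hulls of the sequence $\{x_n\}$ and weak compactness, and in modern treatments through the Radon--Nikod\'ym property of reflexive spaces, which forces integral and nuclear operators on $E$ to coincide isometrically, so that the integral norm of $u$ (what the separation bound controls) dominates the projective norm (what controls the trace). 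None of this machinery appears in your outline, and the identity $L(E,E^{**})=L(E,E)$ you point to merely sets up the duality without producing the norm estimate. As written, the proposal reduces the theorem to a step that is false as stated and unproved in its corrected form.
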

\begin{theorem}\cite[Theorem 3]{Kuch76}\label{T:bap}
Let one of the following conditions be satisfied:
\begin{enumerate}
  \item $E$ is reflexive\footnote{It seems that it would be sufficient to assume that $E$ is a complemented subspace in its bi-dual.} and has AP;
  \item $F$ has BAP and is isomorphic to a dual space\footnote{It seems that the statement still holds when $F$ is a complemented in its bi-dual and has BAP}.
\end{enumerate}
Then either $\ext{F}{E}=0$, or $\dim \ext{F}{E}=\infty$.
\end{theorem}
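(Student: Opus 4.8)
The plan is to argue by contradiction: assume $0 < \dim \ext{F}{E} < \infty$ under either hypothesis, and use the finiteness together with the approximation structure to produce a contradiction with the fact (recorded in Theorem \ref{T:Fact_ext}) that a nonzero class in $\ext{F}{E}$ corresponds to a genuinely non-additive Bartle-Graves section. I would work with the projective-resolution description (\ref{E:proj}) from Theorem \ref{T:injproj} in case (1)-ish situations and with the injective one in case (2); more precisely, in case (2) one uses that $F$ being a dual space with BAP gives a net (in fact, since $F$ is dual, a \emph{weak-}$*$ convergent net that can be arranged to be suitably bounded) of finite-rank operators $F_\beta \to I_F$ approximating the identity, and in case (1) Theorem \ref{T:Grot} upgrades AP on the reflexive space $E$ to BAP, so one again has a bounded net $E_\beta \to I_E$ of finite-rank operators.

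The key mechanism is the following: multiplication of $\ext{F}{E}$ by the finite-rank operators $E_\beta$ (on the left) or $F_\beta$ (on the right), using the bilinear actions from Theorem \ref{T:action}(1), factors through $\ext{F}{\,\mathrm{finite\ dim}\,}$ or $\ext{\,\mathrm{finite\ dim}\,}{E}$, and such $Ext^1$ groups vanish (a finite-dimensional space is both injective and projective relative to the relevant maps, or directly: every extension of or by a finite-dimensional space splits, since finite-dimensional subspaces are complemented and finite-dimensional quotients lift). Hence for every $\alpha\in\ext{F}{E}$ one has $E_\beta\,\alpha = 0$ and $\alpha\,F_\beta = 0$ for all $\beta$. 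The heart of the matter is then to show that, \emph{when $\ext{F}{E}$ is finite-dimensional}, the net of operators $\alpha \mapsto E_\beta\,\alpha$ (respectively $\alpha\mapsto\alpha F_\beta$) on the finite-dimensional vector space $\ext{F}{E}$ converges to the identity map of $\ext{F}{E}$ — because $E_\beta\to I_E$ strongly and multiplication is, via the factor-system picture, continuous in an appropriate sense on a finite-dimensional target. Combining $E_\beta\alpha\to\alpha$ with $E_\beta\alpha = 0$ forces $\alpha = 0$, i.e. $\ext{F}{E}=0$, contradicting $\dim\ext{F}{E}>0$.

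I would carry out the steps in this order. First, record the vanishing lemma: $\ext{F}{P}=0$ and $\ext{P}{E}=0$ for finite-dimensional $P$, and deduce $E_\beta\alpha=0$, $\alpha F_\beta=0$. Second, set up the convergence-to-identity claim on the finite-dimensional space $V:=\ext{F}{E}$: choose a basis $\alpha_1,\dots,\alpha_k$ of $V$ with chosen factor-system representatives $\phi_1,\dots,\phi_k\in S(F,E)$; the action of $E_\beta$ sends the class of $\phi_j$ to the class of $(E_\beta\circ\phi_j)$, and since $E_\beta\to I_E$ uniformly on compacts while each $\phi_j$ is strongly continuous, one shows $(E_\beta\circ\phi_j)-\phi_j \to 0$ in a sense strong enough to conclude that its $S(F,E)/\rho R(F,E)$-class tends to $0$ in the (Hausdorff, since finite-dimensional) quotient — this is where one must be careful that the quotient topology on a finite-dimensional $Ext^1$ is genuinely the Euclidean one and that the estimate (5) in Definition \ref{D:factors} for $E_\beta\circ\phi_j$ stays controlled, using the uniform bound $\sup_\beta\|E_\beta\|<\infty$ from BAP. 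Third, conclude. The dual/BAP case (2) is symmetric, using right multiplication by $F_\beta$ and the representation (\ref{E:proj}) or a direct factor-system argument; the reason one needs $F$ to be a dual space is to guarantee the finite-rank net can be taken bounded and to make the relevant weak-$*$ limits behave, exactly as in the classical proof that BAP of a dual passes to the predual structure.

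The main obstacle I anticipate is the second step: making precise that, on a \emph{hypothetically} finite-dimensional $\ext{F}{E}$, the maps induced by a strongly-convergent bounded net of finite-rank operators converge to the identity. The subtlety is that $\ext{F}{E}$ carries only a quotient (possibly non-Hausdorff in general) topology from $S(F,E)$, and one must exploit finite-dimensionality to know this topology is the usual one, then show the factor-system representatives can be chosen so that $E_\beta\circ\phi_j$ stays in a norm-bounded subset of $S(F,E)$ and converges appropriately. Once that continuity/convergence statement is nailed down, the contradiction is immediate; everything else (the vanishing lemma, the symmetry between the two cases, the invocation of Theorem \ref{T:Grot}) is routine.
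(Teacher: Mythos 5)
Your overall strategy (act on a hypothetically finite-dimensional $\ext{F}{E}$ by the finite-rank net coming from BAP, note that finite-rank operators annihilate $Ext^1$, and derive $\alpha=0$) is genuinely different from the paper's, and its first step is fine: $E_\beta$ factors through a finite-dimensional space $P$, every extension of or by $P$ splits, so $E_\beta\,\alpha=0$ by functoriality. But the step you yourself flag as the obstacle --- $E_\beta\,\alpha\to\alpha$ in $\ext{F}{E}$ --- is a genuine gap, not a technicality, and I do not believe it can be closed along the lines you sketch. The quotient map $S(F,E)\to S(F,E)/\rho R(F,E)$ (equivalently $L(N,E)\to L(N,E)/r(L(l_1(\G_2),E))$) is continuous only for the \emph{norm} topology, and even after you use the open-mapping lemma to ensure the quotient is Hausdorff, the functionals defining it are merely norm-continuous. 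Since $E_\beta\to I_E$ only strongly (uniformly on compacts), $(E_\beta-I_E)\circ\phi$ does \emph{not} tend to $0$ in the $S(F,E)$-norm --- $\phi$ takes values in a non-compact set --- and there is no reason a norm-continuous functional vanishing on $\rho R(F,E)$ should be continuous for strong convergence of bounded nets. So ``$E_\beta\alpha\to\alpha$'' is unsupported, and with it the whole contradiction.

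A symptom that something is missing: your argument uses reflexivity of $E$ (resp.\ $F$ being a dual space) only to pass from AP to BAP, so if it worked it would prove the theorem assuming only BAP of $E$ --- a strictly stronger statement than the one claimed, and one the hypotheses are clearly designed to avoid. In the paper's proof the reflexivity/duality hypothesis is used a \emph{second}, essential time: one first shows (via the lemma that a bounded operator with non-closed range has infinite-codimensional range) that finite dimensionality of $\ext{F}{E}$ forces the range of the restriction map $L(l_1(\G_2),E)\to L(N,E)$ to be closed, hence by the open mapping theorem every extendable operator admits an extension with uniformly controlled norm; one then extends the finite-rank approximants $K_\alpha T$ with uniform bounds and extracts an extension of $T$ itself as a \emph{limit point} in a Tikhonov product of weakly compact balls of $E$ (resp.\ weak-$*$ compact balls of $F^*$). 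That compactness step is exactly what replaces your unjustified convergence $E_\beta\alpha\to\alpha$: instead of showing the classes converge in the quotient, one exhibits $T$ directly as an element of the range. To repair your proposal you would need to import both ingredients --- the uniform bound from the open mapping theorem and the weak/weak-$*$ compactness --- at which point you have essentially reconstructed the paper's argument.
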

\begin{proof}
Let us start with proving the first statement. According to Theorem \ref{T:injproj}, if $\ext{F}{E}\neq \{0\}$, then the (continuous) restriction operator
\beq\label{E:restrict}
L(l_1(\G_2),E)\to L(N,E)
\eeq
is not surjective. If we show that the range of this operator is not closed, the following simple folklore consequence of the open mapping theorem, together with Theorem \ref{T:injproj} will imply that the space $\ext{F}{E}$ is infinite dimensional.
\begin{lemma}(Implicit in \cite{Krein})
Let $A: B_1\to B_2$ be a bounded linear operator between Banach spaces. If its range $R(A)$ is not closed, then
\beq\label{E:inf}
\dim (B_2/R(A))=\infty.
\eeq
(It is important to notice that \textbf{no closure of $R(A)$ is being taken in (\ref{E:inf}).})
\end{lemma}
\begin{proof}
First of all, one can assume that $R(A)$ is dense in $B_2$. Factoring out the kernel of $A$, we can also assume that $A$ is injective. Suppose that the dimension in question is finite and is equal to $n$. Consider and $n$-dimensional linear subspace $V\subset B_2$ such that the linear span of $V$ and $R(A)$ is the whole space $B_2$. Consider now the operator $A'$ from $B_1\oplus V$ to $R(A) + V=B_2$, defined as follows:
\beq
A':=\left(
      \begin{array}{cc}
        A & 0 \\
        0 & I_V \\
      \end{array}
    \right).
\eeq
This is an injective, continuous, and surjective operator from $B_1\oplus V$ onto $B_2$. Since then its inverse is also continuous, convergent sequences in $R(A)$ correspond to convergence of their pre-images in $B_1$. This contradicts the assumed non-closedness of $R(A)$.
\end{proof}
We can now return to the proof of the 1st instance of the theorem. The task is to show that the range of the operator (\ref{E:restrict}) is not closed. Suppose that it is closed. Then, according to the open mapping theorem, there exists a constant $C$ such that for any $T\in L(N,E)$ that is extendable to the whole space $\l_1(\G)$, there exists an extension $\tilde{T}$ with an estimate \beq
\|\tilde{T}\|\leq C\|T\|.
\eeq
Using the BAP property, which follows from the assumption and Theorem \ref{T:Grot}, one can show that then any bounded operator is extendable, which would contradict to the assumptions. Indeed, according to the Hahn-Banach theorem, all finite rank operators are extendable. Let now $T\in L(N,E)$ and $T_\alpha:=K_\alpha T$, where $K_\alpha$ are the operators from the BAP property. We have then $\|T_\alpha\|\leq C\|T\|$. Let $\tilde{T_\alpha}$ be such extensions that $\|\tilde{T_\alpha}\|\leq C\|T\|$. Let $\mathcal{P}$ be the (compact) Tikhonov product
\beq
\mathcal{P}:=\mathop{\sqcap}\limits_{x\in B}B_{E,C},
\eeq
where $B$ is the unit ball in $l_1(\G)$ and $B_{E,C}$ is the ball of radius $C$ in $E$ equipped with weak topology.  Consider the net $\{x_\alpha\}\subset \mathcal{P}$, where
\beq
x_\alpha:=\mathop{\sqcap}\limits_{x\in B}\tilde{T_\alpha} x,
\eeq
and $x\in B$. Let $\mathcal{X}$ be a limit point for $\{x_\alpha\}$. One defines now an operator $\tilde{T}$ as follows:
\beq
\mathcal{X}=\mathop{\sqcap}\limits_{x\in B}\tilde{T}x.
\eeq
One can check now that $\tilde{T}$ is the required extension of $T$, which finishes the proof of the first claim of the theorem.

In the second, case considerations are analogous, using the second statement of Theorem \ref{T:injproj}. Let $E=X^*$ and the following short exact sequence holds:
\beq
\short{N}{i}{\l_1(\G)}{\pi}{X}.
\eeq
Then we get the exact sequence
\beq
\short{E}{\pi^*}{\l_\infty(\G)}{i^*}{N^*}.
\eeq
According to Theorem \ref{T:injproj}, we have
\beq
\ext{F}{E}\approx L(F,N^*)/(i^*L(F,l_\infty(\G))).
\eeq
Like in the previous case, it is sufficient to prove that if $\ext{F}{E}\neq 0$, the range of the operator $i^*$ acting on $L(F,l_\infty(\G))$ is not closed. Suppose that this is not true and $R(i^*)$ is closed. Then, as before, there exists a constant $C>0$ such that for any $T\in i^*L(F,N^*)$ there exists $\tilde{T}\in L(F,l_\infty(\G))$ such that $i^*\tilde{T}=T$ and $\|\tilde{T}\|\leq C\|T\|$. Let now $T\in L(F,N^*$ be arbitrary and $K_\beta\in L(F)$ - operators from the BAP property. Consider the operators $T_\beta:=TK_\beta\in L(F,N^*)$ and their ``liftings'' $\tilde{T_\beta}\in L(F,l_\infty(\G))$. Each operator $\tilde{T_\beta}:F\mapsto l_\infty(\G)$ is represented by a bounded (uniformly with respect to $\beta$ and $\gamma$) family of functionals $f_{\beta,\gamma}\in F^*$. Due to weak-* compactness of the ball in $F^*$, the previous construction enables one to find an operator $\mathcal{T}$ that is a limit point  in weak-* topology. One can check its boundedness and the equality $i^*\mathcal{T}=T$. This contradicts the assumption that $\ext{F}{E}$, and thus $L(F,N^*)/(i^*L(F,l_\infty(\G)))$ is not zero.
\end{proof}

A different class of cases when one can prove that $\ext{F}{E}$ is either zero or infinite-dimensional can be provided as follows.

According to part (2) of Theorem \ref{T:action}, the algebras $L(E)$ and $L(F)$ act on $\ext{F}{E}$. This action is unital, i.e. $I_E$ (resp. $I_F$) acts as identity.
%If $0<\dim(\ext{F}{E})<\infty$, the left ideal $J$, the kernel of that action, is of finite co-dimension.

\begin{lemma}
If $E\oplus E$ is isomorphic to $E$, then there are no unital (i.e., with $I_E$ acting as an identity) action of $L(E)$ on any finite-dimensional vector space $V$.
\end{lemma}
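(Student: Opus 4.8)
The plan is to use the hypothesis $E\oplus E\cong E$ to manufacture, inside $L(E)$, a full system of $n\times n$ matrix units for every $n\ge 1$, and then to observe that a unital algebra homomorphism from an algebra carrying such matrix units into $\mathrm{End}(V)$ with $\dim V<\infty$ can only exist when $V=\{0\}$. First I would record that $E\oplus E\cong E$ yields, by an immediate induction, isomorphisms $E^{n}:=E\oplus\cdots\oplus E\cong E$ for all $n\ge 1$. Fix $n$, choose an isomorphism $u\in L(E,E^{n})$ with inverse $v\in L(E^{n},E)$, and let $\iota_i\colon E\to E^{n}$ and $\pi_j\colon E^{n}\to E$ be the canonical injections and projections, so that $\pi_j\iota_k=\delta_{jk}I_E$ and $\sum_i\iota_i\pi_i=I_{E^{n}}$. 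Setting $e_{ij}:=v\,\iota_i\,\pi_j\,u\in L(E)$, a one-line computation gives the matrix-unit relations
\[
e_{ij}e_{kl}=\delta_{jk}\,e_{il},\qquad \sum_{i=1}^{n}e_{ii}=I_E .
\]

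Now suppose $\pi\colon L(E)\to\mathrm{End}(V)$ is a unital algebra homomorphism with $d:=\dim V<\infty$, and put $E_{ij}:=\pi(e_{ij})$. These operators satisfy the same relations, so the $E_{ii}$ are pairwise orthogonal idempotents with $\sum_{i}E_{ii}=\pi(I_E)=I_V$; hence $V=\bigoplus_{i=1}^{n}E_{ii}V$. Moreover $E_{i1}$ restricts to an isomorphism of $E_{11}V$ onto $E_{ii}V$ with inverse $E_{1i}$ (because $e_{1i}e_{i1}=e_{11}$ and $e_{i1}e_{1i}=e_{ii}$), so all the summands $E_{ii}V$ share a common dimension $r$ and therefore $d=nr$. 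Since $n$ was arbitrary, $d$ is divisible by every positive integer; if $r\ge 1$ this forces $d\ge n$ for all $n$, which is absurd, so $r=0$ and hence $d=0$, i.e.\ $V=\{0\}$. This proves the lemma.

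The argument is essentially formal once the matrix units are in hand, so the only point requiring real care is the passage from $E\oplus E\cong E$ to the existence of such a system in $L(E)$; everything afterward is the standard Peirce decomposition attached to a system of matrix units. I would also note that unitality is barely used: for an arbitrary (not necessarily unital) algebra homomorphism $\pi\colon L(E)\to\mathrm{End}(V)$ with $\dim V<\infty$, the same computation gives $\dim\bigl(\pi(I_E)V\bigr)=nr$ for all $n$, forcing $r=0$, hence $\pi(I_E)=0$ and, by multiplicativity, $\pi\equiv 0$. Thus $L(E)$ has no nonzero finite-dimensional representation whatsoever, which is the natural strengthening one gets for free.
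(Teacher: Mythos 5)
Your proof is correct, but it follows a genuinely different route from the paper's. The paper uses only the single splitting $E\cong E_1\oplus E_2$ with $E_1,E_2\cong E$: writing $P_1=\tau\tau_1$ with $\tau_1\tau=I_E$ for an isomorphism $\tau\colon E\to E_1$, it observes that the image of $\tau$ under the action has a left inverse, hence (finite dimensionality) a two-sided inverse, forcing $\widetilde{P_1}=I_V$ and therefore $\widetilde{P_2}=0$; by symmetry $\widetilde{P_1}=0$ as well, and $I_V=\widetilde{P_1}+\widetilde{P_2}=0$. In other words, the paper exploits that each $P_i$ is Murray--von Neumann equivalent to $I_E$, so a single $2\times 2$ decomposition already yields $I_V=2I_V$. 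You instead build $n\times n$ matrix units for every $n$ from $E^n\cong E$ and use only that the diagonal idempotents are equivalent to \emph{one another}, which gives the weaker conclusion $n\mid\dim V$ and so requires letting $n$ grow past $\dim V$. Both uses of finite dimensionality are legitimate; yours is the standard Peirce-decomposition argument and has the advantage of delivering, essentially for free, the stronger statement that $L(E)$ admits no nonzero finite-dimensional representation at all (unital or not), whereas the paper's is more economical, needing only the hypothesis as literally stated rather than its iterates $E^n\cong E$. One could also shortcut your argument at $n=2$ by noting, as the paper effectively does, that $e_{11}=(v\iota_1)(\pi_1u)$ with $(\pi_1u)(v\iota_1)=I_E$ forces $\pi(e_{11})$ to be an idempotent of full rank, hence $I_V$, and likewise for $e_{22}$.
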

\begin{proof}
We denote for an operator $T\in L(E)$ its action on $V$ as $\widetilde{T}$. Let $E_1\oplus E_2\approx E$, $E_1, E_2 \approx E$, and $P_j$ be the corresponding projector onto $E_j$ in $E$. Consider an isomorphism $\tau: E\mapsto E_1$ and its action $\tilde{\tau}$ on the space $V$ of presentation. Then $\tau_1:=\tau^{-1}P_1$ is a left inverse to $\tau$. Hence, $\widetilde{\tau_1}:=\widetilde{\tau^{-1}P_1}$ is a left inverse to $\tilde{\tau}$. Due to finite dimensionality, it is also a right inverse. This implies that
\beq
\widetilde{P_2}=\widetilde{I_E -P_1}=\widetilde{I_E-\tau\tau_1}=0.
\eeq
The same consideration with $E_2$ instead of $E_1$ shows that $\widetilde{P_1}=0$. Thus,
\beq
I_V=\widetilde{I_E}=\widetilde{P_1+P_2}=0,
\eeq
which contradicts to the property of being unital.
\end{proof}.%
%A similar consideration proves
%\begin{lemma}\label{L:Fnoact}
%If $E\oplus F$ is isomorphic to $F$, then there are no unitary (i.e., with $I_F$ acting as an identity) action of $L(F)$ on any finite-dimensional vector space $V$.
%\end{lemma}

The lemma implies the following result:
\begin{theorem}\cite[Theorem 3]{Kuch76}\label{T:ideal}
Let  $E\oplus E$ be isomorphic to $E$ or $F\oplus F$ be isomorphic to $F$.
Then either $\ext{F}{E}=0$, or $\dim \ext{F}{E}=\infty$.
\end{theorem}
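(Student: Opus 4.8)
The plan is to deduce Theorem~\ref{T:ideal} directly from the preceding Lemma together with part (2) of Theorem~\ref{T:action}. Recall that the Lemma asserts that if a Banach space $E$ satisfies $E\oplus E\approx E$, then $L(E)$ admits no unital action on a finite-dimensional vector space; symmetrically, if $F\oplus F\approx F$, then $L(F)$ admits no such action. By part (2) of Theorem~\ref{T:action}, $L(E)$ acts on $\ext{F}{E}$ from the left as a unital representation, and $L(F)$ acts from the right as a unital anti-representation. So the strategy is simply: assume $\dim\ext{F}{E}<\infty$ and derive that the relevant action must be the zero map on a nonzero space, contradicting unitality unless $\ext{F}{E}=0$.

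In more detail, first I would treat the case $E\oplus E\approx E$. Set $V:=\ext{F}{E}$ and suppose $0<\dim V<\infty$. Since the left action of $L(E)$ on $V$ is a unital representation (i.e., $I_E$ acts as $I_V$), the Lemma applies verbatim with this $V$ and yields a contradiction. Hence $V=0$, i.e. $\ext{F}{E}=0$. The case $F\oplus F\approx F$ is handled the same way, with one cosmetic adjustment: the right action of $L(F)$ is a unital \emph{anti}-representation rather than a representation. One checks that the proof of the Lemma goes through unchanged for anti-representations, since the only properties used are that composition is respected up to order (irrelevant for the scalar identities $\widetilde{I_E}=I_V$, $\widetilde{ST}$ being a one-sided inverse of $\widetilde{T}$ when $S$ is a one-sided inverse of $T$) and that finite-dimensionality upgrades one-sided inverses to two-sided ones. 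Alternatively, one may simply invoke the Lemma for the opposite algebra $L(F)^{\mathrm{op}}$, which acts on $V$ as a genuine unital representation and still satisfies the hypothesis since $E\oplus E\approx E$ is a statement about the underlying Banach space, not the algebra structure.

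The only point that needs a word of care — and which I expect to be the main (minor) obstacle — is making sure the anti-representation case is legitimately covered by a Lemma stated for representations. I would resolve this by either (i) remarking explicitly that the Lemma's proof uses only one-sided invertibility bookkeeping and therefore applies to anti-representations as well, or (ii) noting that a unital anti-representation of $L(F)$ on $V$ is the same as a unital representation of $L(F)^{\mathrm{op}}$ on $V$, and that the hypothesis $F\oplus F\approx F$ is exactly what the Lemma requires of the Banach space whose bounded-operator algebra is acting. Everything else is a direct citation. I would write:

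\begin{proof}
Suppose $\dim\ext{F}{E}<\infty$; we must show $\ext{F}{E}=0$. By part (2) of Theorem~\ref{T:action}, the algebra $L(E)$ acts on the finite-dimensional vector space $V:=\ext{F}{E}$ by a unital representation, and $L(F)$ acts on $V$ by a unital anti-representation; the latter is the same as a unital representation of the opposite algebra $L(F)^{\mathrm{op}}$ on $V$ (one checks directly that the proof of the Lemma, which uses only that a one-sided inverse of an operator is sent to a one-sided inverse and that in finite dimensions a one-sided inverse is two-sided, applies verbatim to anti-representations). If $E\oplus E\approx E$, the Lemma applied to $L(E)$ forces $V=0$. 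If instead $F\oplus F\approx F$, the Lemma applied to $L(F)$ (equivalently, to $L(F)^{\mathrm{op}}$) forces $V=0$. In either case $\ext{F}{E}=0$, completing the proof.
\end{proof}
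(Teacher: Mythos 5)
Your proof is correct and follows exactly the route the paper intends: the paper derives Theorem~\ref{T:ideal} from the preceding Lemma together with part (2) of Theorem~\ref{T:action}, stating only that ``the lemma implies the following result.'' Your additional care with the anti-representation case for $L(F)$ (reducing to $L(F)^{\mathrm{op}}$, or noting that the Lemma's argument only uses one-sided invertibility plus finite-dimensionality) fills in a detail the paper leaves implicit, and is exactly the right way to do it.
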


%%%%%%%%%%%%%%%%%%%%%%
\section{Remarks and conclusions}\label{S:rem}
%%%%%%%%%%%%%%%%%%%%

\begin{enumerate}
\item In the complemented case the problem of group representation extensions has been resolved long time ago \cite{KirillovBook,Kuch76,Moore}. Our main aim was to treat the general case. One can also consult with \cite{CastGr} concerning groups actions on twisted sums of Banach spaces.
  \item \textbf{Extensions in the Hilbert case. Lessons from \cite{Enflo}.}
If in the extension
\beq
\short{E}{}{H}{}{F}
\eeq
the space $H$ is (isomorphic to) a Hilbert space, then one concludes that both $E$ and $F$ are Hilbert spaces and the extension is trivial and thus represents the zero element in $\ext{F}{E}$. This begs the following natural question:
\begin{center}
\textbf{If $H$ is (isomorphic to) a Hilbert space, is $\ext{H}{H}$ equal to zero?}
\end{center}
To put it differently, if a subspace $E$ of a Banach space $G$ is isomorphic to a Hilbert space, and the corresponding quotient space $G/E$ also is, is then $G$ necessarily also a Hilbert space?
This question was answered in the negative\footnote{Although the $Ext$ functor was not eplicitly mentioned there.} in the famous paper \cite{Enflo},
where it was shown that
$$\ext{l_p}{l_p}\neq 0$$
for any $1<p<\infty$ (in particular, for $p=2$). The interesting observation is that, although no homological techniques were declared in
\cite{Enflo}, the proof there is best understood in terms of the factor system representation of $Ext^1$, which is described in Section \ref{S:factor}.
One can also notice, as the brief proof below shows, that the statement is about the local, rather than global geometry of Banach spaces, so the following little bit (just superficially) more general result holds\footnote{See also Theorem \ref{T:Bconv} by J.~Castillo.}.

      \bed We say that a Banach space $E$ contains uniformly complemented copies of $l^n_p$ for any $n$, if for any natural $n$ there exists an $n$-dimensional subspace $E_n\subset E$ and a projector $P_n:E\mapsto E_n$ such that $\|P_n\|\leq C$ and $d(E_n,l^n_p)\leq C$, where $d$ is the Banach-Mazur distance and $l^n_p$ is the $n$-dimensional $l_p$-space.
      \ed
      It is known that this property being satisfied for some $1<p<\infty$ is also satisfied for $p=2$.
     %%%%%%%%%%%%%%%%%%%%%%%%%

      \begin{theorem}\label{T:Sp}
      Let $E$ and $F$ contain uniformly complemented copies of $l^n_d$ for any natural $n$ and be infinite dimensional. Then
      \beq
      \ext{F}{E}\neq 0.
      \eeq
      \end{theorem}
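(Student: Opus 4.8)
The strategy is to reduce the infinite–dimensional statement to a finite–dimensional instability estimate, and then to glue the finite pieces into a genuine (nonsplit) extension using the factor–system representation of $Ext^1$ from Section \ref{S:factor}. By the remark preceding the statement we may as well take $d=2$, so $E$ and $F$ contain uniformly complemented copies $E_n\approx l^n_2$, $F_n\approx l^n_2$ with projections of norm $\le C$ and Banach--Mazur distance $\le C$. The first step is to recall (this is the Enflo--Lindenstrauss--Pisig circle of ideas, and is the content of \cite{Enflo}) the quantitative fact that there is \emph{no} uniform bound, over $n$, on the norm of a linear splitting of the natural short exact sequence built from the Kalton--Peck / Enflo twisted sum of $l^n_2$ with itself: concretely, there exist factor systems $\phi_n\in S(l^n_2,l^n_2)$ with $\|\phi_n\|\le 1$ (say) for which $\operatorname{dist}(\phi_n,\rho R(l^n_2,l^n_2))\ge c\,\log n\to\infty$. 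This is the ``local'' input; everything else is an amplification argument.

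Next I would transplant these $\phi_n$ into $E$ and $F$. Using the isomorphisms $u_n\colon l^n_2\to E_n$ and $v_n\colon l^n_2\to F_n$ (with $\|u_n\|,\|u_n^{-1}\|,\|v_n\|,\|v_n^{-1}\|\le C$) together with the bounded projections $Q_n\colon F\to F_n$, I form $\psi_n:=u_n\,\phi_n\,(v_n^{-1}Q_n,\,v_n^{-1}Q_n)\in S(F,E)$, i.e. $\psi_n(x,y):=u_n\phi_n(v_n^{-1}Q_nx,\,v_n^{-1}Q_ny)$, and check that properties (1)--(5) of Definition \ref{D:factors} survive (the constant $C(\psi_n)$ is controlled by $C^3\|\phi_n\|$, because $Q_n$ is linear so it interacts trivially with the telescoping sum in (5)). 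The key point is that the \emph{obstruction does not shrink}: if $\psi_n=\rho h$ for some $h\in R(F,E)$, then restricting/co-restricting through $u_n^{-1}$, $v_n$ and the inclusion $F_n\hookrightarrow F$ would exhibit $\phi_n$ as a $\rho$-coboundary with an $R(l^n_2,l^n_2)$-norm controlled by $C^{\,O(1)}\|h\|$ — so $\operatorname{dist}(\psi_n,\rho R(F,E))\ge c'\log n$ as well, by the local lower bound. Now I would assemble a single factor system on $F$ by placing the $\psi_n$ on a sequence of ``disjoint blocks''. The cleanest way is to pick almost-disjoint finite-dimensional pieces $E_n\subset E$, $F_n\subset F$ (possible since $E,F$ are infinite dimensional: pass to an infinite-dimensional subspace, split it into an $\ell_2$-sum of the $E_n$'s up to a fixed constant, using that $\ell_2$ embeds complementably, or at worst just take a weakly-null ``almost disjoint'' sequence and a small perturbation) and set $\phi:=\sum_n 2^{-n}\psi_n$, or more carefully $\phi$ supported blockwise so that condition (5) holds with a single finite constant; then $[\phi]\in S(F,E)/\rho R(F,E)\approx Ext^1(E,F)$ is nonzero because its ``$n$-th coordinate'' $[\psi_n]$ is nonzero with norm bounded below.

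Let me say where the real work is. Condition (5) — the quantitative boundedness of the telescoped factor system — is the delicate one when summing infinitely many $\psi_n$: one must arrange the blocks $F_n$ so that for any finite sequence $x_1,\dots,x_m\in F$ the cross terms $\psi_n(\sum_{i\le k}x_i,x_{k+1})$ only ``see'' the $F_n$-components of the $x_i$, and that these components have disjoint enough supports that $\sum_n C(\psi_n)$ worth of contributions collapse to $C\sum_i\|x_i\|$ rather than blowing up. If $E$ and $F$ literally contained isometric $\ell_2$-sums of the $l^n_2$'s this is immediate; in general I would first replace $E$ by a subspace $E'\subset E$ isomorphic (with constant depending only on $C$) to $\big(\sum_n l^n_2\big)_{\ell_2}$ — which is itself a Hilbert space, hence sits inside $E$, but I need it \emph{complemented with uniformly bounded projections onto the coordinates}, which is exactly what the ``uniformly complemented copies'' hypothesis delivers via a standard gliding-hump / Pełczyński-decomposition argument — and similarly for $F$, and then $\phi$ literally decomposes as a diagonal sum and (5) is automatic. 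So the main obstacle is this bookkeeping: extracting from the hypothesis a complemented copy of the $\ell_2$-sum with uniform control, so that the local Enflo obstruction can be summed without the factor-system norm exploding. Once that is in place, nonvanishing of $Ext^1(E,F)$ follows from Theorem \ref{T:Fact_ext} together with the local lower bound $\operatorname{dist}(\psi_n,\rho R)\to\infty$, since a single $h\in R(F,E)$ would have to trivialize all the $\psi_n$ simultaneously with one finite norm, which is impossible.
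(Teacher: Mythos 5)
Your local input (Enflo's finite--dimensional construction on $l^n_2$) and the transplantation through the uniformly complemented copies $E_n,F_n$ are exactly the paper's ingredients, and that part of your plan is sound. One imprecision first: as literally stated, the lower bound $\mathrm{dist}(\phi_n,\rho R(l^n_2,l^n_2))\ge c\log n$ is false, because in finite dimensions every extension splits, so $\rho$ is surjective there and that distance is always $0$. The correct quantitative local statement --- and the one you actually use afterwards --- is that every $h$ with $\rho h=\phi_n$ has $\mathrm{dist}(h,L(l^n_2,l^n_2))\ge c\log n$ while $\|\phi_n\|\le 1$; i.e.\ the \emph{splitting constant} blows up, not the distance to the range of $\rho$.

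The genuine gap is the gluing step, and you have correctly identified it as the place where the work is --- but it cannot be carried out from the stated hypothesis. To make the diagonal sum $\phi=\sum_n\psi_n$ satisfy condition (5) of Definition \ref{D:factors} with one finite constant, you need the blocks to form something like a complemented $\bigl(\sum_n l^n_2\bigr)_{\ell_2}$-decomposition of a subspace of $F$ (and of $E$) with uniform constants; the weighted variant $\sum_n 2^{-n}\psi_n$ does not help, since the factor $2^{-n}$ destroys the lower bound on the $n$-th coordinate. Extracting such a complemented $\ell_2$-sum from uniformly complemented copies of $l^n_2$ is not a standard gliding-hump or Pe{\l}czy\'nski-decomposition argument: whether a space containing uniformly complemented $l^n_2$'s must contain a complemented copy of $\ell_2$ is a well-known open problem in Banach space theory. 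Fortunately the assembly is unnecessary. Argue by contradiction, as the paper does: if $\ext{F}{E}=0$, then by Theorem \ref{T:Fact_ext} the map $\rho:R(F,E)\to S(F,E)$ is surjective, so the open mapping theorem supplies a single constant $C_0$ with $\mathrm{dist}(h,L(F,E))\le C_0\|\rho h\|$ for all $h\in R(F,E)$. The transplanted Enflo maps $H_k$, each living on its own pair $(F_{n(k)},E_{n(k)})$ composed with the projection $Q_{n(k)}$ --- no disjointness or summation over $n$ is needed --- satisfy $\|\rho H_k\|\le C'$ uniformly while $\mathrm{dist}(H_k,L(F,E))\to\infty$, contradicting $C_0$. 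One uniform constant from the open mapping theorem replaces the infinite diagonal sum, and this removes the only unfillable step in your proposal.
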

      %%%%%%%%%%%%%%%%%%
      \begin{proof}
      If $\ext{F}{E}=0$, then, according to Theorem \ref{T:Fact_ext}, the operator $\rho:R(F,E)\mapsto S(F,E)$ is surjective. This means that it defines an isomorphism
      \beq
      R(F,E)/L(F,E) \approx S(F,E).
      \eeq
      By the open mapping theorem, there is a constant $C$ such that for any $h\in R(F,E)$ there exists a linear operator $H\in L(F,E)$ such that $\|h-H\|\leq \|\rho h\|$. According to the definition of the norm in $S(F,E)$, one can see that the inequality $\|\rho h\|\leq 1$ is equivalent to
      \beq\label{E:increase}
      \|\sum_{i=1}^nh(x_i)\|\leq \sum_{i=1}^n \|x_i\|
      \eeq
      for any collection of vectors $x_i$ such that $\sum x_i=0$.

      The crucial moment in \cite{Enflo} was finding a technique of increasing the distance of $h$ to the space $L(F,E)$ of linear operators without breaking the inequality (\ref{E:increase}), which immediately leads to the statement of the theorem. Here is the construction from \cite{Enflo}. Let $h$ belongs to $R(l^m_2,l^m_2)$. Consider the operator $\Delta h\in R(l^{2m}_2,l^{2m}_2)$ acting as follows:
      \beq
      \Delta h(x,y):=\left(h(x),h(y),\frac{x\|y\|}{\sqrt{\|x\|^2+\|y\|^2}}\right),
      \eeq
      where $x,y\in l^m_2$. A simple calculation shows that the inequality (\ref{E:increase}) holds for $\Delta h$, if it does for $h$, while iteration of this construction shows that
      \beq
      dist (\Delta^k h, L(l_2,l_2)) \mathop{\to}\limits_{k\to\infty} \infty.
      \eeq

      Let now $E$ and $F$ satisfy the conditions of the theorem. Thus, there for any $n$ there exist $n$-dimensional subspaces $E_n\subset E$ and $F_n\subset F$ and projectors $P_n:E\to E_n, Q_n:F\to F_n$, such that
      $$
      \max{d(E_n,l_2^n), d(F_n,l_2^n), \|P_n\|, \|Q_n\|} < C.
      $$
      We thus can ``rethink'' $\Delta^k h$ as elements of $R(F_n,E_n)$ with an appropriate value of $n$. Considering now $H_k:=\Delta^k h P_n$, we conclude that $H_k\in R(F,E)$, $\|\sum_{i=1}^rH(k(x_i)\| \leq \sum \|x_i\|$, and
      $$
      \mathop{dist} (H_k, L(F,E)) \to \infty,
      $$
      which finishes the proof of the theorem.
      \end{proof}

The author has been informed by Prof. J.~M.~F.~Castillo that the locality of this result can be made much more explicit and general, see e.g. \cite[Theorems 3 and 4]{CabelloHouston} and \cite{KaltonPelc}. He also showed that a better and more general formulation instead of Theorem \ref{T:Sp} can be proven the same way:
\begin{theorem}\label{T:Bconv}
Given two $B$-convex spaces\footnote{See \cite{Beck,Talagr}} $X$ and $Y$, the space $Ext^1(X,Y)$ is nonzero.
\end{theorem}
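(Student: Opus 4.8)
The plan is to deduce Theorem~\ref{T:Bconv} from Theorem~\ref{T:Sp}, the only extra ingredient being a standard piece of local Banach space geometry. First I would dispose of a degenerate case: the statement is intended for infinite-dimensional $X$ and $Y$. Indeed, if $\dim X<\infty$ then $Y$ has finite codimension in any $G$ sitting in $0\mapsto Y\mapsto G\mapsto X\mapsto 0$, hence is complemented; and if $\dim Y<\infty$ then $Y$ is complemented in $G$ by Hahn--Banach. In either case every extension splits, so $\ext{X}{Y}=0$. From now on assume $\dim X=\dim Y=\infty$.

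Next, recall the equivalent descriptions of $B$-convexity: by the James--Giesy and Maurey--Pisier theorems, $X$ is $B$-convex if and only if it does not contain $l_1^n$ uniformly, if and only if it has nontrivial type, if and only if (Pisier) it is $K$-convex. The geometric input I would invoke is the complemented form of Dvoretzky's theorem valid in $K$-convex spaces (Figiel--Lindenstrauss--Milman, Pisier): there is a constant $C$, depending only on the $K$-convexity constant of $X$, such that for every natural $n$ there exist a subspace $X_n\subset X$ with $d(X_n,l_2^n)\le C$ and a projection $P_n\colon X\to X_n$ with $\|P_n\|\le C$; and likewise for $Y$. That is, every infinite-dimensional $B$-convex space contains uniformly complemented copies of $l_2^n$ for all $n$ --- precisely the hypothesis of Theorem~\ref{T:Sp} with exponent $2$.

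Granting this, the theorem is immediate: Theorem~\ref{T:Sp} applies to the pair $X,Y$ and gives $\ext{X}{Y}\neq 0$. For concreteness, this is exactly the argument already run in the proof of Theorem~\ref{T:Sp}: if $\ext{X}{Y}=0$, then Theorem~\ref{T:Fact_ext} together with the open mapping theorem produces a constant forcing every $h\in R(X,Y)$ to lie within $\|\rho h\|$ of $L(X,Y)$; transplanting the iterated Enflo map $\Delta^k h$ of \cite{Enflo} from $l_2^m$ into the uniformly complemented Euclidean blocks of $Y$, and pulling it back along the projections $P_n$ onto the Euclidean blocks of $X$, yields operators $H_k\in R(X,Y)$ with $\|\rho H_k\|$ bounded uniformly in $k$ but $\operatorname{dist}(H_k,L(X,Y))\to\infty$, a contradiction. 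No new computation is required beyond what is in the proof of Theorem~\ref{T:Sp}.

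The main obstacle --- and the only real content beyond Theorem~\ref{T:Sp} --- is the upgrade from almost-Euclidean to \emph{uniformly complemented} almost-Euclidean subspaces. Dvoretzky's theorem alone yields the former in every infinite-dimensional space, but the transplanted construction genuinely needs the projections onto the Euclidean blocks, and it is here that $B$-convexity is used in an essential way: boundedness of the Rademacher projection ($K$-convexity) is exactly what supplies uniformly bounded projections onto the random Euclidean subspaces. This is also the point at which the formulation of Theorem~\ref{T:Bconv} is both more transparent and more general than the ad hoc hypothesis of Theorem~\ref{T:Sp}; for sharper ``locality'' versions one may consult \cite{CabelloHouston}.
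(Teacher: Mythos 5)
Your proposal is correct and follows exactly the route the paper intends: the text states Theorem~\ref{T:Bconv} without proof, remarking only that it ``can be proven the same way'' as Theorem~\ref{T:Sp}, and your argument is precisely that reduction. Your added value is making explicit the bridge the paper leaves implicit --- that an infinite-dimensional $B$-convex space is $K$-convex (Giesy--James, Pisier) and therefore contains uniformly \emph{complemented} copies of $l_2^n$ for all $n$ (Figiel--Tomczak-Jaegermann), which is exactly the hypothesis under which the Enflo transplantation in the proof of Theorem~\ref{T:Sp} runs.
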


An immediate corollary of Theorems \ref{T:injproj}, \ref{T:bap}, and \ref{T:Sp} is:

\begin{theorem}
Let $E$ and $F$ be infinite-dimensional $\mathcal{L}_2$-spaces satisfying one of the conditions of Theorems \ref{T:injproj} or \ref{T:bap} (e.g., $E\approx F \approx l_2$). Then
\beq
\dim \ext{F}{E}=\infty.
\eeq
\end{theorem}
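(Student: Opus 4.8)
The plan is to read this off as a straight combination of the non-vanishing result (Theorem~\ref{T:Sp}) and the ``zero-or-infinite'' dichotomy (Theorem~\ref{T:bap}): once we know that $\ext{F}{E}\neq 0$ and that $\ext{F}{E}$ is either trivial or infinite-dimensional, nothing is left to prove. So the whole task is to check that infinite-dimensional $\mathcal{L}_2$-spaces meet the hypotheses of both theorems simultaneously.

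First I would record the structural facts. By the Lindenstrauss--Pe\l czy\'nski characterization, being an $\mathcal{L}_2$-space is the same as being isomorphic to a Hilbert space; consequently an infinite-dimensional $\mathcal{L}_2$-space $X$ is reflexive, has the approximation property (a compact subset lies in a separable subspace, where $I_X$ is approximated uniformly on it by orthogonal projections onto an increasing sequence of finite-dimensional subspaces), and is isomorphic to a dual space. Thus $X$ satisfies \emph{both} hypotheses of Theorem~\ref{T:bap}. If one prefers to avoid quoting the characterization, these properties — together with the one used in the next step — follow directly from the definition of an $\mathcal{L}_p$-space, since finite-dimensional subspaces of an $\mathcal{L}_2$-space sit uniformly well-complemented inside finite-dimensional subspaces uniformly isomorphic to $l^n_2$.

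Next I would verify the locality hypothesis of Theorem~\ref{T:Sp} with $d=2$, namely that $X$ contains uniformly complemented copies of $l^n_2$ for every $n$. Transporting the Hilbert inner product through an isomorphism, any $n$-dimensional subspace $X_n\subset X$ is uniformly isomorphic to $l^n_2$ and is the range of a projection of uniformly bounded norm (the orthogonal projection conjugated by the isomorphism). Applying Theorem~\ref{T:Sp} to $E$ and $F$ then gives $\ext{F}{E}\neq 0$.

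Finally, by Theorem~\ref{T:bap} (either of whose hypotheses was checked above) the space $\ext{F}{E}$ is either $\{0\}$ or infinite-dimensional; since the previous step rules out the first alternative, $\dim\ext{F}{E}=\infty$. In the model case $E\approx F\approx l_2$ one may instead use Theorem~\ref{T:ideal}, as $l_2\oplus l_2\approx l_2$, to supply the dichotomy. There is no genuine obstacle in the argument; the only point requiring care is matching hypotheses — confirming that $\mathcal{L}_2$-spaces are at once ``local enough'' for Theorem~\ref{T:Sp} and ``regular enough'' (reflexive with AP, or a dual with BAP) for Theorem~\ref{T:bap} — which is precisely what the first two steps accomplish.
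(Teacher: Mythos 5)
Your proposal is correct and is exactly the paper's intended argument: the paper states this result as an immediate corollary of Theorem \ref{T:Sp} (which gives $\ext{F}{E}\neq 0$ for spaces containing uniformly complemented $l^n_2$'s) combined with the zero-or-infinite dichotomy of Theorem \ref{T:bap}. Your additional verification that infinite-dimensional $\mathcal{L}_2$-spaces, being isomorphic to Hilbert spaces, automatically satisfy the hypotheses of both theorems is a correct and welcome filling-in of details the paper leaves implicit.
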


\item As Professor J.~Castillo has mentioned to the author, the statement of Theorem \ref{T:ideal} follows immediately from the formula
\beq
\ext{F\oplus F}{E}=\ext{F}{E}\oplus\ext{F}{E}
\eeq
and its analog for $E\oplus E$.

This is a nice derivation. However our proof is probably worth more than the result, since it is based on absence of unital finite dimensional representations of $L(E)$, which \textbf{might} turn out to be useful in other situations.

  \item \textbf{$\ext{F}{E}$ can be equal to zero} for some infinite-dimensional Banach spaces $E$ and $F$. Quite a few examples of various
  kinds are known. Here are just some of them:
      \begin{enumerate}
        \item If $E$ is injective or $F$ is projective in a category of Banach spaces closed under extensions, then
            \beq
            \ext{F}{E}=0.
            \eeq
        For instance, Sobczyk's Theorem (e.g., \cite{Sob,Veech,Goldb}) implies that
            \beq
            \ext{F}{c_0}=0
            \eeq
            for any separable space $F$.
        \item J. Bourgain has provided \cite{Bourg} an example of non complemented subspace $B$ in $l_1$, isomorphic to $l_1$. Let $E:l_1/B$. Then one can show that
            \beq
            \ext{l_2}{E}=0.
            \eeq
        \item J. Lindenstrauss proved (without using the notion of $Ext$) that if $F$ is an $\mathcal{L}_1$-space and $E$ is complemented in its bidual, then
            \beq
            \ext{F}{E}=0.
            \eeq
      \end{enumerate}

  \item The author wants to clarify again that the results of this text were announced
  in 1976 in the $1.5$ pages long brief communication paper \cite{Kuch76}.
  To the best of author's knowledge, the proof of the solution of Kirillov's three-representation problem has never been published.

  The paper \cite{Kuch76} also contained the factor system representation of
  functor $Ext^1$ in \textbf{Ban}, which preceded by several years the well known by now somewhat similar,
  but still
  different twisted-sum construction of \cite{KaltonPeck} (see also \cite{3space_book} and
  references therein). It also contained possibly the first representation of $Ext^1$ in
  \textbf{Ban} through projective resolvents, as well as results excluding in various cases
  possibility of existence of $\ext{F}{E}$ of a non-zero finite dimension. The author has decided to
  provide details of considerations of \cite{Kuch76} here. It is clear that after the 45 years of
  enormous developments in the Banach space theory since \cite{Kuch76} appeared, some of the results presented could be (and most probably have been) extended or rediscovered. The author, being not an expert on the current state of
  the field, cannot say for sure.
\end{enumerate}

%%%%%%%%%%%%%%%
\section{Acknowledgments}
%%%%%%%%%%%%%
The author thanks J.~F.~M.~Castillo, W.~B.~Johnson, and A.~A.~Pankov for extremely useful comments, corrections, and references. Thanks
also go to the NSF for the support through the DMS grants \# 1517938 and \# 2007408.
%\bibliographystyle{plain}
%\bibliography{bk_bibl,additional}

\end{document}